\documentclass[a4paper, 10pt]{article}
\usepackage[utf8]{inputenc}
\usepackage[left=20 mm, bottom=20 mm, right=20 mm, top=20 mm]{geometry}
\usepackage{amsmath, amssymb, amsthm, csquotes, tikz, float}
\usepackage{hyperref}

\usepackage[english]{babel}
\usepackage{graphicx,epstopdf,epsfig}
\usepackage{amsfonts,epsfig,fancyhdr, graphics, hyperref,amsmath, tikz,amssymb}
\usepackage{geometry}
\usepackage{tikz}
\usetikzlibrary{decorations.markings}
\newtheorem{theorem}{Theorem}[section]

\newtheorem{remark}{Remark}[section]

\newtheorem{prop}{Proposition}[section]
\newtheorem{defn}{Definition}[section]
\newtheorem{cor}{Corollary}[section]
\newtheorem{eg}{Example}[section]
\numberwithin{equation}{section}

\newcommand{\Names}{Partha Rana} 
\newcommand{\Title}{Some Symmetric Sign Patterns Requiring Full $P$-vertices}

\def\det{{\rm det}}
\def\sgn{{\rm sgn}}
\def\In{{\rm In}}

\def\min{{\rm min}}

\def\dist{{\rm dist}}
\def\rank{{\rm rank}}

\begin{document}
\title{\Title\thanks{
		Corresponding Author: Partha Rana.}
	\author{Partha\ Rana\thanks{Department of Mathematics, Indian Institute of Technology Guwahati, Guwahati,
			Assam 781039, India.\\
			E-mail addresses: r.partha@iitg.ac.in (P. Rana). }}
}

\markboth{\Names}{\Title}

\date{}

\maketitle

\begin{abstract}
A sign pattern is a matrix whose entries belong to $\{+, -, 0\}$. Let $\mathcal{P}$ be a symmetric sign pattern and $A$ a real symmetric matrix in its qualitative class. A vertex $v_j$ of the underlying graph of $\mathcal{P}$ is called a $P$-vertex if
$
m_{A(j)}(0)-m_A(0)=1,
$
where $A(j)$ is the principal submatrix obtained by deleting the $j$-th row and column of $A$, and $m_A(0), ~m_{A(j)}(0)$ denotes the algebraic multiplicity of the eigenvalue $0$ of $A, ~A(j)$, respectively. We say that $\mathcal{P}$ requires full $P$-vertices if every symmetric matrix in its qualitative class has all vertices as $P$-vertices.
In this paper, we investigate structural conditions under which symmetric sign patterns require full $P$-vertices. We establish necessary and sufficient conditions for several classes of sign patterns to require full $P$-vertices. In particular, we prove that a tree sign pattern with a $0$-diagonal requires full $P$-vertices if and only if its underlying graph admits a perfect matching. We also derive necessary and sufficient conditions for sign patterns whose underlying graphs contain cycles but no loops to require full $P$-vertices.
\end{abstract}

\noindent {\bf Key words:} Sign pattern, Tree, $P$-vertex, Perfect matching, Tridiagonal sign pattern.

\noindent {\bf AMS Subject Classification:} 05C50, 15A18, 15B35.
\maketitle



	\section{Introduction}
	A matrix whose entries belong to the set $\{+, -, 0\}$ is called a 
	\textit{sign pattern matrix} (or simply, a \textit{sign pattern} or a 
	\textit{pattern}). The set of all $n\times n$ sign patterns is denoted 
	by $\mathcal{Q}_n$. For $\mathcal{P} = [p_{ij}] \in \mathcal{Q}_n$, the set  
	$$
	\mathcal{Q}(\mathcal{P}) =
	\left\{
	A = [a_{ij}] \in \mathbb{R}^{n\times n} :
	\sgn(a_{ij}) = p_{ij} \ \text{for all} \ i,j = 1, 2, \dots, n
	\right\}
	$$
	is called the \textit{qualitative class} of $\mathcal{P}$. 	A sign pattern $\mathcal{P} = [p_{ij}] \in \mathcal{Q}_n$ is said to be \textit{symmetric} if both $p_{ij}$ and $p_{ji}$ have the same sign $\text{for all} \ i,j = 1, 2, \dots, n$.
	We define the class of all symmetric matrices in the qualitative class of the symmetric sign pattern $\mathcal{P}$ by
	$$\mathcal{Q}_{\mathrm{SYM}}(\mathcal{P})=\{A\in \mathcal{Q}(\mathcal{P}): A=A^{\mathsf T}\}.$$

A sign pattern $\mathcal{P} \in \mathcal{Q}_n$ is called \textit{sign nonsingular} if every matrix $A \in \mathcal{Q}(\mathcal{P})$ is nonsingular. That is, in the standard expansion of $\det(\mathcal{P})$ into $n!$ terms, there exists at least one nonzero term and all nonzero terms have the same sign (see, e.g., \cite{2001}). The sign pattern $\mathcal{P}$ is said to be \textit{sign singular} if every matrix $A \in \mathcal{Q}(\mathcal{P})$ is singular, or equivalently, if $\det(\mathcal{P}) = 0$.

	Two matrices $A_1$ and $A_2$ are called \textit{equivalent} if one can be transformed into the other by applying a sequence of operations consisting of permutation similarity, signature similarity, negation, and transposition. Such an equivalence relation preserves several matrix properties; in particular, zreo is an eigenvalue of $A_1$ with multiplicity $m$ if and only if zero is an eigenvalue of $A_2$ with multiplicity $m$.

	The \textit{signed directed graph} $D(\mathcal{P}) = (V, E)$ of $\mathcal{P} = [p_{ij}] \in \mathcal{Q}_n$ is the directed graph with vertex set $V = \{v_1, v_2, \dots, v_n\}$ and directed edge $(v_i, v_j) \in E$ if and only if $p_{ij} \neq 0$. The directed edge $(v_i,v_j)$ is associated with a $+$ if $p_{ij}=+$ and associated with a $-$ if $p_{ij}=-$. 
	A product of the form $\beta = p_{i_1 i_2} p_{i_2 i_3}\cdots p_{i_k i_{k+1}},$ 
	where each factor is nonzero and the indices $i_1,i_2,\dots,i_{k+1}$ are all distinct is called a \textit{path} of length $l(\beta)=k$ from $v_{i_1}$ to $v_{i_{k+1}}$.
	If $i_1,i_2,\dots,i_{k}$ are all distinct, then $\gamma=p_{i_1i_2}p_{i_2i_3}\cdots p_{i_ki_1}$ is called a \textit{simple cycle}, denoted by $\gamma=(v_{i_1},v_{i_2},...,v_{i_k})$, of length $l(\gamma)=k$.  For $k=1$, the simple cycle $\gamma=p_{i_1i_1}$ is called a \textit{loop}. The simple cycle $\gamma$ is said to be \textit{positive} (respectively, \textit{negative}) if
	$\sgn(\gamma)=(-1)^{k-1} \, p_{i_1 i_2} p_{i_2 i_3} \cdots p_{i_k i_1}$ is positive (respectively, negative). 
	{Therefore, if $l(\gamma)$ is even and the product $p_{i_1i_2}p_{i_2i_3}\cdots p_{i_ki_1}$ is positive (respectively, negative), then $\gamma$ is a negative (respectively, positive) simple cycle. If $l(\gamma)$ is odd and the product $p_{i_1i_2}p_{i_2i_3}\cdots p_{i_ki_1}$ is positive (respectively, negative), then $\gamma$ is a positive (respectively, negative) simple cycle.} 
	Suppose that $\gamma_i$'s are mutually vertex disjoint simple cycles with $l(\gamma_i)=k_i$, for $i=1,2,...,t$, then $\Gamma=\gamma_1\gamma_2\cdots\gamma_t$ is called a \textit{composite cycle} of length $l(\Gamma)=\sum_{i=1}^{t}k_i$ and  $\sgn(\Gamma)=\prod_{i=1}^t\sgn(\gamma_i)$.
	{Throughout this paper, we assume all the cycles to be simple unless otherwise mentioned.}

	Let $\mathcal{P} = [p_{ij}] \in \mathcal{Q}_n$ be a symmetric sign pattern matrix. The \textit{underlying signed undirected graph} $G(\mathcal{P})$ of $\mathcal{P}$ is the undirected graph with the edge $\{v_i,v_j\}$ positively signed if $p_{ij}\neq 0$ $\text{for all} \ i,j = 1, 2, \dots, n$.
	A \textit{path} $P_k=\{v_{i_1}, v_{i_2}\} \{v_{i_2}, v_{i_3}\} \cdots \{v_{i_k}, v_{i_{k+1}}\}$ of length $l(P_k) = k$ in $G(\mathcal{P})$ is a sequence of $k$ edges of $G(\mathcal{P})$, where the vertices $v_{i_1}, v_{i_2}, \dots, v_{i_{k+1}}$ are all distinct. For $k \geq 3$, if the vertices $v_{i_1}, v_{i_2},..., v_{i_{k}}$ are all distinct, then $\{v_{i_1}, v_{i_2}\} \{v_{i_2}, v_{i_3}\} \cdots \{v_{i_k}, v_{i_{1}}\}$ is called a \textit{simple cycle}, denoted by $C_k = v_{i_1} v_{i_2} \cdots v_{i_k}$, and $l(C_k)=k$. Suppose that $C_{k_i}$'s are mutually vertex disjoint simple cycles in $G(\mathcal{P})$, of length $k_i$, for $i=1,2,...,t$, then $C=C_{k_1}C_{k_2}\cdots C_{k_t}$ is called a \textit{composite cycle} of length $l(C)=\sum_{i=1}^{t}k_i$.  If $S$ is a set of vertices in $G(\mathcal{P})$, then $G(\mathcal{P}) \setminus S$ denotes the subgraph of $G(\mathcal{P})$ obtained by deleting the vertices in $S$ along with their incident edges. For a vertex $v_i$ of $G(\mathcal{P})$, the \textit{degree} of $v_i$, denoted by $\deg(v_i)$, is the number of edges of $G(\mathcal{P})$ incident to $v_i$. A symmetric sign pattern $\mathcal{P}\in \mathcal{Q}_n$ is called a \textit{tree sign pattern} (respectively, \textit{path sign pattern}) if the underlying signed undirected graph $G(\mathcal{P})$ is a tree (respectively, path).

	A set of edges
	$
	\mathcal{M} = \{\{v_{i_1}, v_{j_1}\}, \{v_{i_2}, v_{j_2}\}, \dots, \{v_{i_k}, v_{j_k}\}\}
	$
	in $G(\mathcal{P})$ is called a \textit{matching} if no two edges in $\mathcal{M}$ share a common vertex. The \textit{length} of a matching $\mathcal{M}$, denoted by $l(\mathcal{M})$, is the number of edges in $\mathcal{M}$. A matching $\mathcal{M}$ is said to be a \textit{maximum matching} if $l(\mathcal{M})$ is maximum among all matchings in $G(\mathcal{P})$. It is called a \textit{perfect matching} if every vertex of $G(\mathcal{P})$ is incident with exactly one edge in $\mathcal{M}$. Note that each perfect matching of $G(\mathcal{P})$ corresponds to a nonzero term in the determinant expansion of $\mathcal{P}$.

Let $A\in\mathbb{R}^{n\times n}$. If $\alpha$, $\beta$ are subsets of $\{1,2,...,n\}$, then $A(\alpha,\beta)$ denotes the submatrix of $A$ obtained by deleting rows, columns corresponding to the indices in $\alpha, \beta$, respectively. In particular, $A(\alpha)$ denotes $A(\alpha, \alpha)$. For $\alpha = \{k\}$ with $k \in \{1,2,\ldots,n\}$, we denote $A(\alpha)$ by $A(k)$.

	Let $A=[a_{ij}] \in \mathbb{R}^{n\times n}$ be a real symmetric matrix and let $B$ be a principal submatrix of $A$ of order $n-1$. Suppose that 
$\lambda_1 \ge \lambda_2 \ge \cdots \ge \lambda_n$ and 
$\mu_1 \ge \mu_2 \ge \cdots \ge \mu_{n-1}$ 
are the eigenvalues of $A$ and $B$, respectively. Then, by the Cauchy interlacing theorem, we have
\begin{equation}\label{eqx1}
	\lambda_1\geq\mu_1\geq \lambda_2\geq\cdots
\geq\lambda_i\geq \mu_i\geq \lambda_{i+1}\geq \cdots\geq\lambda_{n-1}\geq \mu_{n-1}\geq \lambda_n.\end{equation}
Let $m_A(\lambda_i)$ denote the algebraic multiplicity of $\lambda_i$ as an eigenvalue of $A$.
From~\eqref{eqx1}, it follows that $|m_{A(j)}(\lambda_i)-m_A(\lambda_i)|\leq 1$, for $1\leq j\leq n$. In \cite{siam}, Johnson et al. define the vertex $v_j$, for $1 \leq j \leq n$, to be a \textit{Parter vertex} of $A$ for $\lambda_i$ if $m_{A(j)}(\lambda_i)-m_A(\lambda_i)=1$. The vertex $v_j$ is called a \textit{$P$-vertex} of $A$ if $m_{A(j)}(0)-m_A(0)=1$ (see, e.g., \cite{2008}). The number of $P$-vertices of $A$ is denoted by $P_\nu(A)$. 

\begin{defn}
	A symmetric sign pattern $\mathcal{P}\in \mathcal{Q}_n$ is said to \textit{require full $P$-vertices} if for every matrix $A \in \mathcal{Q}_{\mathrm{SYM}}(\mathcal{P})$, $P_{\nu}(A)=n$.
\end{defn}

\begin{eg}\label{xxeg2.2} \rm
	Consider the symmetric path sign pattern $\mathcal{P}\in \mathcal{Q}_4$ with a $0$-diagonal, whose underlying signed undirected graph $G$ is given in Fig.\ref{fignx1}.\\
	\begin{figure}[H]
		\centering
		\begin{minipage}{.45\textwidth}
			\vspace{-1cm}
			\[
			\mathcal{P}=\begin{bmatrix}
				0 & + & 0 & 0 \\
				+ & 0 & + & 0 \\
				0 & + & 0 & + \\
				0 & 0 & + & 0
			\end{bmatrix}
			\]
		\end{minipage}
		\hspace{0.05\textwidth}
		\begin{minipage}{.45\textwidth}
			\centering
			\tikzset{node distance=1cm}
			\begin{tikzpicture}[
			  vertex/.style={draw, circle, inner sep=0pt, minimum size=.15cm, fill=black},
			edge/.style={thick}
				]
			 \node[vertex,label=above:1] (v1) at (-2,0) {};
			\node[vertex,label=above:2] (v2) at (-1,0) {};
			\node[vertex,label=above:3] (v3) at ( 0,0) {};
			\node[vertex,label=above:4] (v4) at ( 1,0) {};
				
			 \draw[edge] (v1) --  (v2);
			\draw[edge] (v2) -- (v3);
			\draw[edge] (v3) --  (v4);
			\end{tikzpicture}
			\caption{Signed undirected graph \(G\) of \(\mathcal{P}\).}
			\label{fignx1}
		\end{minipage}
		
	\end{figure}
	
 Consider	$$A=\begin{bmatrix}
		0&a&0&0\\ a&0&b&0\\ 0&b&0&c\\0&0&c&0
	\end{bmatrix}\in \mathcal{Q}_{\mathrm{SYM}}(\mathcal{P}), ~\text{where}~ a,b,c>0.$$
Then $\det(A)=a^2c^2$, and hence $m_A(0)=0$. However, for $1\leq i \leq 4$, $\det(A(i))=0$, and therefore $m_{A(i)}(0)=1$. Thus, for $1\leq i \leq 4$, we have
$m_{A(i)}(0)=m_A(0)+1.$
Hence, $\mathcal{P}$ requires full $P$-vertices.

\end{eg}

	Let $A=[a_{ij}] \in \mathbb{R}^{n\times n}$ be a real symmetric matrix, and let $G(A)$ denote its underlying signed undirected graph with vertex set $\{v_1, v_2, \ldots, v_n\}$ and edge set $\{(v_i, v_j) : a_{ij} \neq 0,\ i \neq j\}$.
Since the diagonal entries of $A$ are not taken into account, the graph $G(A)$ is independent of the main diagonal entries of $A$. For a graph $G$ on $n$ vertices, define
$$
S(G)=\{A \in \mathbb{R}^{n\times n} : A = A^{\mathsf{T}},\ G(A)=G\}.
$$

 The study of eigenvalue multiplicities of real symmetric matrices can be traced back to the work of Parter \cite{1960}, who investigated the multiplicities of eigenvalues for matrices whose underlying signed undirected graph is a tree. These results were subsequently extended by Wiener \cite{1984}. For Hermitian matrices, the concept of Parter vertices was introduced by Johnson and Sutton in \cite{siam}. They showed that each singular acyclic matrix of order $n$ has at most $n-2$ $P$-vertices. For real symmetric matrices, the notion of $P$-vertices, as currently understood, was introduced by Kim and Shader in \cite{2008, 2009}. In \cite{2009} Kim and Shader, studied $P$-vertices of nonsingular acyclic matrices whose underlying signed undirected graph is a path or a star. In that work, they also proposed several open problems related to $P$-vertices of acyclic matrices. These problems were subsequently studied in several works (see, e.g., \cite{2011,2013,2013a,2014}). In particular, Anđelić et al. \cite{2013} classified the trees $T$ for which there exists a nonsingular matrix $A \in S(T)$ with full $P$-vertices. Subsequently, Howlader et al. \cite{2025} extended the analysis to unicyclic graphs and characterized those graphs $G$ for which there exists a nonsingular matrix $A \in S(G)$ with full $P$-vertices. Moreover, Sharma et al. \cite{2026} proved that if $G$ is a bipartite graph containing a perfect matching, then there exists a nonsingular matrix $A \in S(G)$ with full $P$-vertices.

	However, characterizing graphs $G$ for which every matrix in $S(G)$ has full $P$-vertices remains unexplored. Since matrices in $S(G)$ with $0$-diagonal are determined by their sign patterns, we study this problem from a sign pattern perspective. In this paper, we consider symmetric sign patterns with a $0$-diagonal and establish necessary and sufficient conditions under which such sign patterns require full $P$-vertices.

    The structure of this paper is as follows. In Section~\ref{s2}, we consider symmetric sign patterns with a $0$-diagonal whose underlying signed undirected graph is a tree. Theorem~\ref{thx1} provides a characterization of such patterns that require full $P$-vertices in terms of the existence of a perfect matching in the underlying signed undirected graph.

   In Section~\ref{s3}, we study symmetric sign patterns whose underlying graph contain cycles but no loops. We focus on sign patterns whose underlying signed undirected graph is unicyclic and derive several necessary and sufficient conditions for such patterns to require full $P$-vertices. In addition, Theorem~\ref{th0.61e} provides a sufficient condition for a nonnegative symmetric sign pattern associated with a dumbbell graph to require full $P$-vertices.

   We conclude with several interesting consequences of the results established in this paper that could not be included in the manuscript. In Section~\ref{s5}, we also pose a number of open problems that arise naturally from our work and provide directions for future research in this area.
	
	\section{Tree Sign Patterns.}\label{s2}
In this section, we study symmetric tree sign patterns with a $0$-diagonal that require full $P$-vertices.

\begin{remark}\cite[Remark 4.1]{2018} \label{l1}\rm
	Let $\mathcal{P}$ be a symmetric sign pattern whose underlying signed undirected graph is a tree. Then for every matrix $A \in \mathcal{Q}(\mathcal{P})$, there exists a diagonal matrix $D$ such that $D^{-1}AD$ is a
	symmetric matrix in $\mathcal{Q}(\mathcal{P})$. Note that $D^{-1}AD$ and $A$ have the same spectrum. Moreover,
	$D$ can be chosen so that $D^{-1}AD$ is symmetric and all nonzero off-diagonal entries are positive.
\end{remark}

By Remark~\ref{l1}, we may assume, without loss of generality, that $\mathcal{P}$ is a nonnegative tree sign pattern with a $0$-diagonal. 
	Let $A=[a_{ij}] \in \mathbb{R}^{n\times n}$ be a real symmetric matrix. The \textit{inertia} of $A$, denoted by $\In(A)$, is defined as the triple $\In(A)=(i_+(A), i_-(A), i_0(A))$, where $i_+(A)$, $i_-(A)$ and $i_0(A)$ denote the number of eigenvalues of $A$ with positive, negative and zero real parts, respectively. For a sign pattern $\mathcal{P} \in \mathcal{Q}_n$, the inertia of $\mathcal{P}$ is defined as $\In(\mathcal{P})=\{\In(A): A\in \mathcal{Q}(\mathcal{P})\}$. We first study the inertia of $\mathcal{P}$ whose underlying signed undirected graph is a path.
	
\begin{prop}\cite[Proposition 3.1]{2001a} \label{1.11pp}\rm
	For the $n\times n$ symmetric tri-diagonal pattern
	\[
	\mathcal{P} = \begin{pmatrix}
		0 & + & 0 & \dots  & \dots     &  0 \\
		+ & 0 & +  & 0 & \dots     & 0   \\
		0 & + & 0  & +     & \ddots    & \vdots    \\
		\vdots&  \vdots & \ddots    &   \ddots   & \ddots &\vdots  \\
		0& 0   &  \dots         &     + &0      & +\\
		0& 0   &  \dots         &     0 &+      & 0
	\end{pmatrix},
	\]
	\begin{itemize}
		\item[(a)]  if $n$ is even, then $\mathcal{P}$ is sign nonsingular, and $\In(\mathcal{P})=\{(\frac{n}{2}, \frac{n}{2},0)\}$,
		
		\item[(b)] if $n$ is odd, then $\mathcal{P}$ is sign singular, and $\In(\mathcal{P})=\{(\frac{n-1}{2}, \frac{n-1}{2},1)\}$.
	\end{itemize}
\end{prop}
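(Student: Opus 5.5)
We need to prove the statement for the path pattern with zero diagonal and all superdiagonal and subdiagonal entries positive. The plan is to reduce to the tridiagonal symmetric matrix $A=[a_{ij}]$ with $a_{i,i+1}=a_{i+1,i}=b_i>0$ and zero diagonal. This is the only part of $\mathcal{Q}(\mathcal{P})$ that matters for symmetric matrices. For nonsymmetric $A\in\mathcal{Q}(\mathcal{P})$ we invoke Remark~\ref{l1}: a diagonal similarity $D^{-1}AD$ produces a symmetric matrix in $\mathcal{Q}(\mathcal{P})$ with the same spectrum. The symmetric entries are $\sqrt{a_{i,i+1}a_{i+1,i}}$. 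Hence it suffices to compute the inertia of symmetric $A$, where all eigenvalues are real.

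\textbf{Step 1: determinant recursion.} Let $d_k$ be the determinant of the leading $k\times k$ principal submatrix. Expanding along the last row gives
\[
d_k = -b_{k-1}^2 d_{k-2},\qquad d_0=1,\ d_1=0 .
\]
Thus $d_k=0$ for odd $k$, and $d_{2m}=(-1)^m\prod_{i=1}^m b_{2i-1}^2\neq 0$. The same conclusion follows from the determinant-term description: the path has a perfect matching if and only if $n$ is even, that matching is unique, and it gives the single nonzero term. This settles sign nonsingularity for even $n$ and sign singularity for odd $n$.

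\textbf{Step 2: spectral symmetry.} Let $S=\mathrm{diag}(1,-1,1,\dots)$. Then $SAS=-A$, since the graph is bipartite. Hence the spectrum of $A$ is symmetric about $0$ and $i_+(A)=i_-(A)$.

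\textbf{Step 3: inertia.} For even $n$, $i_0(A)=0$, so $i_+=i_-=n/2$. For odd $n$, we need $i_0(A)=1$ exactly. Deleting the first row and column leaves the tridiagonal matrix on vertices $2,\dots,n$, of even order $n-1$, which is nonsingular by Step 1. So $\rank(A)\ge n-1$. Since $A$ is symmetric, the algebraic and geometric multiplicities of $0$ coincide, so $i_0(A)\le 1$. Combined with singularity this gives $i_0(A)=1$, and then $i_+=i_-=(n-1)/2$.

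The main point needing care is the multiplicity-one claim for odd $n$. It relies on symmetry, which is why the diagonal-similarity reduction must be done first. Alternatively, one can use Cauchy interlacing \eqref{eqx1} against the nonsingular $(n-1)\times(n-1)$ principal submatrix.
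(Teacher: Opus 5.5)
The paper gives no proof of this proposition. It quotes it from \cite{2001a} and only uses its consequences, so there is no in-paper argument to compare yours with. Your proof is correct and self-contained.

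Starting with the diagonal similarity is the right move. $\In(\mathcal{P})$ is defined over all of $\mathcal{Q}(\mathcal{P})$, not just $\mathcal{Q}_{\mathrm{SYM}}(\mathcal{P})$, and similarity preserves the determinant and the eigenvalues with their algebraic multiplicities. The following three ingredients then give exactly the two stated inertias:
\begin{itemize}
\item the recursion $d_k=-b_{k-1}^2d_{k-2}$, which in fact holds for any $A\in\mathcal{Q}(\mathcal{P})$ with $b_{k-1}^2$ replaced by $a_{k,k-1}a_{k-1,k}>0$;
\item the signature similarity $SAS=-A$;
\item the rank bound coming from the nonsingular principal submatrix on vertices $2,\dots,n$.
\end{itemize}
You also correctly locate the one place where symmetry cannot be dropped. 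Passing from $\rank(A)\ge n-1$ to $i_0(A)\le 1$ requires algebraic multiplicity to equal geometric multiplicity (equivalently, interlacing \eqref{eqx1}), and that is what the reduction secures.

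The paper applies the proposition only in Theorem~\ref{th0.2e}. There only symmetric matrices occur, and only two facts are needed: nonsingularity in even order, and $m_A(0)=1$ in odd order. These are exactly your Steps~1 and~3; Step~2 matters only for the full inertia statement.

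Your Steps~1 and~3 use only $b_i\neq 0$, not $b_i>0$. So they directly cover the tridiagonal matrices $A(i)$ in Theorem~\ref{th0.2e}, whose off-diagonal entries may be negative. The stated all-$+$ version of the proposition reaches those matrices only after an extra signature similarity.
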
	
	
The following theorem characterizes symmetric tree sign patterns that require full $P$-vertices in terms of perfect matchings in their underlying signed undirected graphs.

	\begin{theorem}\label{thx1}
		Let $\mathcal{P}\in \mathcal{Q}_n$ be a symmetric tree sign pattern with a $0$-diagonal. Then $\mathcal{P}$ requires full $P$-vertices if and only if the underlying signed undirected graph $G(\mathcal{P})$ has a perfect matching.
	\end{theorem}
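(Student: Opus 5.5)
Since $G(\mathcal{P})$ is a tree and the diagonal is zero, every term in the expansion of $\det(A)$ (and of any principal minor) comes from a spanning collection of disjoint cycles. In a forest the only cycles available are the $2$-cycles given by edges, so nonzero terms correspond exactly to perfect matchings. A forest has at most one perfect matching, because a leaf must be matched to its unique neighbour and induction then forces the rest. Hence for any symmetric $A\in\mathcal{Q}_{\mathrm{SYM}}(\mathcal{P})$ and any subset $\alpha$, the principal minor $\det(A(\alpha))$ is either $0$ (no perfect matching of $G\setminus\alpha$) or equal to $\pm\prod a_{ij}^2$ over the unique matching (hence nonzero). This sign-nonsingular-or-sign-singular dichotomy is the basic tool. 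The plan is to combine it with the fact that, for a symmetric matrix, $m_A(0)=n-\rank(A)$, and that the rank of a matrix in $S(T)$ with zero diagonal equals twice the matching number of $T$.

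\emph{Sufficiency.} Suppose $G(\mathcal{P})$ has a perfect matching. By the dichotomy, $\det A\neq 0$ for every $A$, so $m_A(0)=0$. For any vertex $v_j$, the forest $G\setminus\{v_j\}$ has an odd number $n-1$ of vertices, so it has no perfect matching and $\det A(j)=0$. Thus $m_{A(j)}(0)\ge 1$, and by interlacing \eqref{eqx1} $m_{A(j)}(0)\le m_A(0)+1=1$. Hence every vertex is a $P$-vertex, as in Example~\ref{xxeg2.2}.

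\emph{Necessity.} Suppose $G(\mathcal{P})$ has no perfect matching. We want some $A$, and some vertex, that is not a $P$-vertex; in fact we will show this holds for every $A$.

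1. Establish the rank identity $\rank(A)=2\mu$, where $\mu$ is the maximum matching size. Any principal submatrix on the vertex set of a maximum matching is nonsingular by the dichotomy, so $\rank A\ge 2\mu$. Conversely, any nonzero $k\times k$ minor $\det A(\alpha,\beta)$ has a nonzero expansion term, i.e. a bijection $\sigma$ from the row set to the column set along edges (zero diagonal). The union of the edges $\{i,\sigma(i)\}$ in the forest yields a matching of size $\ge k/2$. The cleaner route is to cite the standard result that the rank of a symmetric matrix equals its largest nonsingular principal submatrix order, which reduces the bound to principal minors and hence to perfect matchings. So $m_A(0)=n-2\mu>0$.

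2. Pick a vertex $v$ that is missed by some maximum matching $M$; such a $v$ exists because $M$ is not perfect. Then $M$ is still a matching in $T\setminus v$, so $\mu(T\setminus v)=\mu$ and
\[
m_{A(v)}(0)=(n-1)-2\mu=m_A(0)-1.
\]
Hence $v$ is not a $P$-vertex, for every $A$, and $\mathcal{P}$ does not require full $P$-vertices.

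The main obstacle is making the rank identity in step 1 rigorous. I would first try the principal-rank property of symmetric matrices, which reduces everything to principal minors and thus to perfect matchings of induced subforests. Proposition~\ref{1.11pp} serves as the path sanity check of this identity.
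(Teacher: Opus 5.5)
Your proof is correct and follows essentially the same route as the paper. For sufficiency, both arguments use sign nonsingularity of $A$ (from the unique perfect matching of a tree) together with $\det A(j)=0$ and interlacing; for necessity, both use $m_A(0)=n-2\mu$ and a vertex left uncovered by a maximum matching. The only difference is that the paper fixes one $A$ by emphasizing the entries on $\mathcal{M}$ to get $\rank(A)\ge 2\mu$, whereas your forest dichotomy shows the rank identity holds for every $A\in\mathcal{Q}_{\mathrm{SYM}}(\mathcal{P})$. Your direct minor-counting bound $\rank(A)\le 2\mu$ is also valid, since in a forest with zero diagonal the arcs $i\mapsto\sigma(i)$ split into vertex-disjoint paths and $2$-cycles.
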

	\begin{proof}
		First, suppose that $G(\mathcal{P})$ has a perfect matching. Then $n$ must be even. 
		Since $G(\mathcal{P})$ is a tree, the perfect matching is unique. 
	   Therefore, for all $A\in \mathcal{Q}_{\mathrm{SYM}}(\mathcal{P})$, $\det(A)\neq 0$, and hence $m_A(0)=0$.
		For $i\in\{1,\ldots,n\}$, $G(\mathcal{P})\setminus\{v_i\}$ is a tree of odd order and hence it does not have a perfect matching. 
		Hence $\det(A(i))=0$, since $|m_{A(i)}(0)-m_A(0)|\leq 1$, which implies that
		$m_{A(i)}(0)=m_A(0)+1$, for all $i=1,2,\ldots,n$. 
		Therefore, $\mathcal{P}$ requires full $P$-vertices.
		
		For the converse, suppose that $G(\mathcal{P})$ does not have a perfect matching. 
		Let $\mathcal{M}$ be a maximum matching in $G(\mathcal{P})$ of length $m$. 
		Choose $A\in \mathcal{Q}_{\mathrm{SYM}}(\mathcal{P})$ by symmetrically emphasizing the entries corresponding to the edges in $\mathcal{M}$, then $\rank(A)\geq 2m$. Also, the maximum length of composite cycles in the signed directed graph of $\mathcal{P}$ is $2m$, and hence $\operatorname{rank}(A)\leq 2m$. Therefore, $\operatorname{rank}(A)=2m$, and thus $m_A(0)=n-2m$.
		Let $v_i\in V(G(\mathcal{P}))\setminus V(\mathcal{M})$. Then $\mathcal{M}$ is also a maximum matching in $G(\mathcal{P})\setminus \{v_i\}$. 
		Hence $m_{A(i)}(0)=(n-1)-2m$.
		Therefore, $v_i$ is not a $P$-vertex of $A$.
	\end{proof}
	
	The following corollary for path sign patterns follows immediately from Theorem~\ref{thx1}.
	
	\begin{cor}\label{th0.1e}
		Suppose that $\mathcal{P}\in \mathcal{Q}_n$ is a symmetric tridiagonal sign pattern with a $0$-diagonal. Then $\mathcal{P}$ requires full $P$-vertices if and only if $n$ is even.
	\end{cor}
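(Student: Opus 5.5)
The plan is to obtain Corollary~\ref{th0.1e} directly from Theorem~\ref{thx1}. The only graph-theoretic input needed is: a path on $n$ vertices has a perfect matching if and only if $n$ is even.

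First I note that a symmetric tridiagonal sign pattern with a $0$-diagonal has the path $P_{n-1}=\{v_1,v_2\}\{v_2,v_3\}\cdots\{v_{n-1},v_n\}$ as its underlying signed undirected graph $G(\mathcal{P})$. This presumes every super-diagonal entry is nonzero, which is the standing meaning of ``tridiagonal'' here and matches Proposition~\ref{1.11pp}. Since a path is a tree, $\mathcal{P}$ is a symmetric tree sign pattern with a $0$-diagonal. Theorem~\ref{thx1} therefore applies: $\mathcal{P}$ requires full $P$-vertices if and only if this path has a perfect matching. The signs of the off-diagonal entries play no role, because Theorem~\ref{thx1} already covers arbitrary signs via Remark~\ref{l1}.

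Next I establish the matching criterion. If $n$ is even, then $\{\{v_1,v_2\},\{v_3,v_4\},\dots,\{v_{n-1},v_n\}\}$ is a perfect matching. If $n$ is odd, no perfect matching can exist, since a perfect matching covers an even number of vertices ($2$ per edge). Combining this with the previous step gives the corollary.

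As a consistency check, in the odd case Proposition~\ref{1.11pp}(b) gives $m_A(0)=1$, and an endpoint $v_1$ is not a $P$-vertex: $A(1)$ is the tridiagonal matrix of even order $n-1$, which is nonsingular by Proposition~\ref{1.11pp}(a), so $m_{A(1)}(0)=0$. The argument has no real obstacle. The only point needing care is confirming that ``tridiagonal'' includes all super-diagonal entries being nonzero; if some were zero, the graph would be a disjoint union of paths rather than a tree, and Theorem~\ref{thx1} would not apply as stated.
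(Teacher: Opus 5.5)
Your proposal is correct and follows the same route as the paper: the paper obtains this corollary immediately from Theorem~\ref{thx1}, using the fact that a path on $n$ vertices has a perfect matching exactly when $n$ is even. Your direct check of the odd case via Proposition~\ref{1.11pp} is a valid sanity check but is not needed for the argument.
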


	\section{Sign Patterns Whose Underlying Signed Undirected Graph has Cycles.}\label{s3}
	
In this section, we consider symmetric sign patterns whose underlying signed undirected graph has cycles. We begin with those whose underlying signed undirected graph is a cycle.
	
\begin{theorem} \label{th0.2e}
	Let $\mathcal{P} \in \mathcal{Q}_n$ be a symmetric sign pattern with a $0$-diagonal whose underlying signed undirected graph is a cycle of length $n$. 
	Then $\mathcal{P}$ requires full $P$-vertices if and only if $n$ is even and 
	every composite cycle of length $n$ in the signed directed graph $D(\mathcal{P})$ of $\mathcal{P}$
	has the same sign. 
\end{theorem}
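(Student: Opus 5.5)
The plan is to compute $\det(A)$ and $\det(A(i))$ directly, using only two facts. For every $i$, the graph $G(\mathcal{P})\setminus\{v_i\}$ is a path on $n-1$ vertices. By Remark~\ref{l1} and Proposition~\ref{1.11pp}, $A(i)$ therefore has $m_{A(i)}(0)=0$ when $n-1$ is even and $m_{A(i)}(0)=1$ when $n-1$ is odd. A path is a tree, so the diagonal similarity of Remark~\ref{l1} applies to $A(i)$, makes its off-diagonal entries positive, and preserves the spectrum.

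First I list the composite cycles of length $n$ in $D(\mathcal{P})$ for a cycle graph $v_1v_2\cdots v_n$. They are of two kinds. There are two directed $n$-cycles, one in each orientation. Because $\mathcal{P}$ is symmetric, both have the same product $p_{12}p_{23}\cdots p_{n1}$ and hence the same sign. When $n$ is even there are also the two perfect matchings of $C_n$, each a product of $n/2$ two-cycles. Each two-cycle has sign $-p_{ij}p_{ji}=-$, so each such term has sign $(-1)^{n/2}$. For $A\in\mathcal{Q}_{\mathrm{SYM}}(\mathcal{P})$ with off-diagonal entries $a_{12},\dots,a_{n1}$, this gives
$$\det(A)=2(-1)^{n-1}a_{12}a_{23}\cdots a_{n1}+\epsilon\Big(\prod_{k\ \mathrm{odd}}a_{k,k+1}^2+\prod_{k\ \mathrm{even}}a_{k,k+1}^2\Big),$$
where $\epsilon=(-1)^{n/2}$ if $n$ is even and $\epsilon=0$ if $n$ is odd.

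\emph{Sufficiency.} Suppose $n$ is even and all composite cycles of length $n$ have the same sign. Then every nonzero term of $\det(A)$ has the same sign and at least one term is nonzero, so $\mathcal{P}$ is sign nonsingular and $m_A(0)=0$. Since $n-1$ is odd, each $A(i)$ has $m_{A(i)}(0)=1$ by Proposition~\ref{1.11pp}(b). So every vertex is a $P$-vertex, for every $A$.

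\emph{Necessity.} Suppose first that $n$ is odd. Then each $A(i)$ is an even-order path matrix, which is nonsingular by Proposition~\ref{1.11pp}(a). So $m_{A(i)}(0)=0$, and this cannot equal $m_A(0)+1\geq 1$; hence no vertex is a $P$-vertex. Suppose next that $n$ is even but the composite cycles of length $n$ do not all have the same sign. By the enumeration above, the two $n$-cycle terms then have sign opposite to $(-1)^{n/2}$. Take $A$ with all off-diagonal magnitudes equal to $1$. The determinant formula gives $\det(A)=\pm 2\mp 2=0$, so $m_A(0)\geq 1$. On the other hand, $m_{A(i)}(0)=1$ by Proposition~\ref{1.11pp}(b), so $m_{A(i)}(0)-m_A(0)\leq 0$ and no vertex is a $P$-vertex of this $A$.

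The main obstacle is the sign bookkeeping in the enumeration step. One has to check that the directed $n$-cycle in the two orientations contributes equal terms, which uses symmetry of $\mathcal{P}$. One also has to fix the sign convention $\sgn(\gamma)=(-1)^{k-1}\prod p$ so that ``all composite cycles have the same sign'' is exactly the condition that no cancellation can occur in $\det(A)$. Once that is settled, the choice of all magnitudes equal to $1$ gives an explicit singular $A$ in the necessity argument.
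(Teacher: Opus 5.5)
Your proof is correct and follows the same route as the paper. The composite-cycle condition gives sign nonsingularity of $A$, and Proposition~\ref{1.11pp} applied to the path matrices $A(i)$ gives $m_{A(i)}(0)\in\{0,1\}$ according to the parity of $n-1$. Your explicit determinant formula and your choice of all off-diagonal magnitudes equal to $1$ also supply a concrete singular matrix in $\mathcal{Q}_{\mathrm{SYM}}(\mathcal{P})$ when the signs disagree; the paper only asserts that such a matrix exists.
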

\begin{proof}
Suppose that $n$ is even and every composite cycle of length $n$ in $D(\mathcal{P})$ has the same sign. Then, for all $A\in \mathcal{Q}_{\mathrm{SYM}}(\mathcal{P})$, we have $\det(A)\neq 0$, and hence $m_A(0)=0$. 
For $i\in \{1,2,\ldots,n\}$, the matrix $A(i)$ is a symmetric tridiagonal matrix with a $0$-diagonal of odd order. By Proposition~\ref{1.11pp}(b), we have $m_{A(i)}(0)=1$. 
Therefore, $m_{A(i)}(0)=m_A(0)+1$ for all $i=1,2,\ldots,n$,
and hence $P_{\nu}(A)=n$ for all $A\in\mathcal{Q}_{\mathrm{SYM}}(\mathcal{P})$.
Thus, $\mathcal{P}$ requires full $P$-vertices.
	
	For the converse, first suppose that $n$ is even and $D(\mathcal{P})$ contains composite cycles of length $n$ with opposite signs.
	Then there exists $A\in \mathcal{Q}_{\mathrm{SYM}}(\mathcal{P})$ such that $\det(A)=0$, and hence $m_A(0)\geq 1$. For $i\in \{1,2,\ldots,n\}$, $A(i)$ is a symmetric tridiagonal matrix of odd order with a $0$-diagonal. By Proposition~\ref{1.11pp}(b), $m_{A(i)}(0)=1$. Consequently, $m_{A(i)}(0)\neq m_A(0)+1$ for all $i\in \{1,2,\ldots,n\}$, and hence $\mathcal{P}$ does not require full $P$-vertices.
	
	If $n$ is odd, then for every $A=[a_{ij}]\in \mathcal{Q}_{\mathrm{SYM}}(\mathcal{P})$, $\det(A)=2a_{12}a_{23}\cdots a_{n1}$.
	Thus $A$ is nonsingular. Moreover, for each $i\in \{1,2,\ldots,n\}$, the matrix $A(i)$ is a symmetric tridiagonal matrix with a $0$-diagonal of even order. By Proposition~\ref{1.11pp}(a), $A(i)$ is also nonsingular. Hence $\mathcal{P}$ does not require full $P$-vertices.
	
	Therefore, $\mathcal{P}$ requires full $P$-vertices if and only if $n$ is even and 
	every cycle of length $n$ in the signed directed graph $D(\mathcal{P})$
\end{proof}	
	
	\begin{eg}\rm
		Consider the symmetric sign pattern $\mathcal{P}\in \mathcal{Q}_{4}$ with a $0$-diagonal whose derlying signed undirected graph is $D(\mathcal{P})$, as shown in Figure~\ref{figx11}.\\
			\begin{figure}[H]
			\centering
			\begin{minipage}{.45\textwidth}
				\vspace{-1cm}
				\[
				\mathcal{P}=\begin{bmatrix}
					0 & + & 0 & - \\
					+ & 0 & + & 0 \\
					0 & + & 0 & + \\
					- & 0 & + & 0
				\end{bmatrix}
				\]
			\end{minipage}
			\hspace{0.05\textwidth}
			\begin{minipage}{.45\textwidth}
		\centering
		\usetikzlibrary{decorations.markings}
		
		\tikzset{
			vertex/.style={
				draw, circle, fill=black,
				inner sep=0pt, minimum size=0.15cm
			},
			edge/.style={
				thick,
				postaction={
					decorate,
					decoration={
						markings,
						mark=at position 0.75 with {\arrow{stealth}}
					}
				}
			}
		}
		
		\begin{tikzpicture}[scale=1, every node/.style={font=\small}]
				\node[vertex,label=above:$u_1$] (v1) at (-0.75, 0.75) {};
				\node[vertex,label=above:$u_2$] (v2) at ( 0.75, 0.75) {};
				\node[vertex,label=below:$u_3$] (v3) at ( 0.75,-0.75) {};
				\node[vertex,label=below:$u_4$] (v4) at (-0.75,-0.75) {};
				
	\draw[edge] (v1) -- node[below] {$+$} (v2);
	\draw[edge, bend right=25] (v2) to node[above] {$+$} (v1);			
	\draw[edge] (v2) -- node[left] {$+$} (v3);
	\draw[edge, bend right=25] (v3) to node[right] {$+$} (v2);
	\draw[edge] (v3) -- node[above] {$+$} (v4);
	\draw[edge, bend right=25] (v4) to node[below] {$+$} (v3);
	\draw[edge] (v4) -- node[right] {$-$} (v1);
	\draw[edge, bend right=25] (v1) to node[left] {$-$} (v4);
			\end{tikzpicture}
			\caption{$D(\mathcal{P})$.}
				\label{figx11}
			\end{minipage}
			\end{figure}
	The composite cycles in $D(\mathcal{P})$ of length $4$ are $(u_1,u_2,u_3,u_4)$, $(u_1,u_4,u_3,u_2)$, $(u_1,u_2)(u_3,u_4)$ and $(u_1,u_4)(u_2,u_3)$, and all these cycles are positively signed. Therefore, $\mathcal{P}$ requires full $P$-vertices.
	\end{eg}

    The following definition follows from Definition~3.1 in \cite{2025}.
	
	\begin{defn}\rm
		Let $\mathcal{P} \in \mathcal{Q}_n$ be a symmetric sign pattern, and $P_m$ be a path sign pattern of length $m$ with a $0$-diagonal. The sign pattern $\mathcal{P} \cdot P_m$ is defined to be the sign pattern whose underlying signed undirected graph is obtained by attaching the path $G(P_m)$ to a vertex of $G(\mathcal{P})$.
	\end{defn}
	
	\begin{eg}\rm
Consider the symmetric sign pattern $\mathcal{P}\in \mathcal{Q}_{7}$ with a $0$-diagonal whose underlying signed undirected graph is $C_6 \cdot G(P_1)$, as shown in Figure~\ref{figx1.1}.
		\begin{figure}[H]
			\centering
			\tikzset{node distance=1cm}
		\begin{tikzpicture}[
			vertex/.style={draw, circle, inner sep=0pt, minimum size=.15cm, fill=black},
			edge/.style={thick}
			]
			\node[vertex] (v0) at (0,0) {};
			\node[vertex] (v1) at (-3,0) {};
			\node[vertex] (v2) at (-2,1) {};
			\node[vertex] (v3) at (-1,1) {};
			\node[vertex] (v4) at (-1,-1) {};	 
			\node[vertex] (v5) at (-2,-1) {};	
			\node[vertex] (v6) at (-4,0) {};
			
			\draw[edge] (v3)--(v0);
			\draw[edge] (v4)--(v0);
			\draw[edge] (v1)--(v2);
			\draw[edge] (v2)--(v3);
			\draw[edge] (v4)--(v5);
			\draw[edge] (v5)--(v1);
			\draw[edge] (v1)--(v6);

		\end{tikzpicture}
		\caption{$C_6 \cdot G(P_1)$} \label{figx1.1}
	\end{figure}
	\end{eg}

\begin{theorem}\label{th1.1}
	Suppose that $\mathcal{P}\in \mathcal{Q}_n$ is a symmetric sign pattern with a $0$-diagonal. Let $G(\mathcal{P}\cdot P_1)$ be obtained from $G(\mathcal{P})$ by attaching $G(P_1)$ to the vertex $u$ of $G(\mathcal{P})$. Then $\mathcal{P}\cdot P_1$ requires full $P$-vertices implies $\mathcal{P}'$ requires full $P$-vertices, where $\mathcal{P}'$ is obtained from $\mathcal{P}$ by deleting the rows and columns corresponding to $u$.
\end{theorem}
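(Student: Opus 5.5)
The plan is to use the standard pendant-vertex reduction: deleting a leaf together with its unique neighbour lowers the rank by exactly $2$. Write $\mathcal{Q}=\mathcal{P}\cdot P_1$, and let $w$ be the new vertex, adjacent only to $u$. If $\mathcal{P}'$ is empty (that is, $n=1$) there is nothing to prove, so assume $n\ge 2$. I will show that every matrix $A'\in\mathcal{Q}_{\mathrm{SYM}}(\mathcal{P}')$ occurs as a principal submatrix of some $B\in\mathcal{Q}_{\mathrm{SYM}}(\mathcal{Q})$. I will also show that the zero-eigenvalue multiplicities of $B$, and of its vertex-deleted submatrices, coincide with those of $A'$ and its vertex-deleted submatrices.

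\textbf{Step 1: realizing every $A'$.} Given $A'\in\mathcal{Q}_{\mathrm{SYM}}(\mathcal{P}')$, form $A\in\mathcal{Q}_{\mathrm{SYM}}(\mathcal{P})$ as follows:
\begin{itemize}
\item use $A'$ on the rows and columns of $\mathcal{P}'$;
\item fill the row and column of $u$ with any symmetric real entries having the signs prescribed by $\mathcal{P}$;
\item set the $(u,u)$ entry to $0$.
\end{itemize}
Then take
\[
B=\begin{bmatrix} A & a e_u\\ a e_u^{\mathsf T} & 0\end{bmatrix}\in\mathcal{Q}_{\mathrm{SYM}}(\mathcal{Q}),
\]
with $a\neq 0$ of the prescribed sign. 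Since $\mathcal{Q}$ requires full $P$-vertices, every vertex of $G(\mathcal{Q})$ is a $P$-vertex of $B$. In particular this holds for every vertex $v_j$ of $G(\mathcal{P}')$.

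\textbf{Step 2: the key lemma.} I claim that $m_B(0)=m_{A'}(0)$ and $m_{B(j)}(0)=m_{A'(j)}(0)$ for every vertex $v_j\neq u,w$. The proof is by symmetric (congruence) elimination using the leaf.
\begin{itemize}
\item Row $w$ and column $w$ of $B$ are nonzero only in position $u$, with entry $a$.
\item Subtracting suitable multiples of row $w$ from the other rows, and the same multiples of column $w$ from the other columns, clears every entry in column $u$ and row $u$ except those in positions $(u,w)$ and $(w,u)$. This is a congruence $S^{\mathsf T}BS$.
\item The $(u,u)$ entry is $0$ from the start, and $B_{ww}=0$. Hence the elimination does not alter the block on the vertices of $\mathcal{P}'$.
\item Therefore $B$ is congruent to $A'\oplus\begin{bmatrix}0&a\\a&0\end{bmatrix}$.
\end{itemize}
Because $B$ is symmetric, the algebraic multiplicity of $0$ equals the nullity. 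Sylvester's law of inertia then gives $m_B(0)=m_{A'}(0)$.

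The same argument applies to $B(j)$ for any vertex $v_j$ of $\mathcal{P}'$. In $B(j)$, $w$ is still a leaf attached to $u$, and deleting $u$ and $w$ leaves exactly $A'(j)$. Hence $m_{B(j)}(0)=m_{A'(j)}(0)$.

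\textbf{Step 3: conclusion.} Combining Steps 1 and 2 gives
\[
m_{A'(j)}(0)-m_{A'}(0)=m_{B(j)}(0)-m_B(0)=1
\]
for every vertex $v_j$ of $\mathcal{P}'$. Thus $P_\nu(A')=n-1$, which is the order of $A'$. Since $A'\in\mathcal{Q}_{\mathrm{SYM}}(\mathcal{P}')$ was arbitrary, $\mathcal{P}'$ requires full $P$-vertices.

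The only delicate point is the elimination in Step 2. One must check that the congruence genuinely leaves the block on the vertices of $\mathcal{P}'$ unchanged, and this relies on both $B_{ww}=0$ and $B_{uu}=0$, which the $0$-diagonal hypothesis guarantees. The rest is bookkeeping.
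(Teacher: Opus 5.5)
Your proposal is correct and is essentially the paper's proof. The paper likewise embeds an arbitrary matrix of $\mathcal{Q}_{\mathrm{SYM}}(\mathcal{P}')$ into a matrix of $\mathcal{Q}_{\mathrm{SYM}}(\mathcal{P}\cdot P_1)$, and uses the same leaf-based congruence to reduce both the matrix and each vertex-deleted submatrix to $\begin{bmatrix}0&a\\a&0\end{bmatrix}\oplus(\cdot)$, which transfers the $P$-vertex condition; your remark that $B_{uu}=0$ is needed is an overstatement, since only $B_{ww}=0$ matters and $\begin{bmatrix}B_{uu}&a\\a&0\end{bmatrix}$ is nonsingular regardless.
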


\begin{proof}
	Suppose that $G(P_1)$ has a vertex set $\{v_1, v_2\}$ and $G(\mathcal{P})$ has a vertex set $\{v_2, v_3, \ldots, v_{n+1}\}$, where $u = v_2$. Since $\mathcal{P}\cdot P_1$ requires full $P$-vertices, therefore $P_\nu (A)=n$, for all $A  \in \mathcal{Q}_{\mathrm{SYM}}(\mathcal{P}\cdot P_1)$. Let $B\in \mathcal{Q}_{\mathrm{SYM}}(\mathcal{P}')$. Consider,
	\[
	A =
	\begin{bmatrix}
		0 & a_{12} & \mathbf{0}^{\mathsf T} \\
		a_{12} & 0 & C^{\mathsf T} \\
		\mathbf{0} & C & B
	\end{bmatrix}\in \mathcal{Q}_{\mathrm{SYM}}(\mathcal{P}\cdot P_1),
	\]
	where $\mathbf{0}$ denotes the zero vector of order $n-1$ and $C$ is a vector of order $n-1$.
	Since
	\begin{equation}\label{eq1.2}
		\begin{bmatrix}
			0 & a_{12} & \mathbf{0}^{\mathsf T} \\
			a_{12} & 0 & \mathbf{0}^{\mathsf T} \\
			\mathbf{0} & \mathbf{0} & B
		\end{bmatrix}
		=
		\begin{bmatrix}
			1 & 0 & \mathbf{0}^{\mathsf T} \\
			0 & 1 & \mathbf{0}^{\mathsf T} \\
			- a_{12}^{-1} C & \mathbf{0} & I_{n-1}
		\end{bmatrix}
		\begin{bmatrix}
			0 & a_{12} & \mathbf{0}^{\mathsf T} \\
			a_{12} & 0 & C^{\mathsf T} \\
			\mathbf{0} & C & B
		\end{bmatrix}
		\begin{bmatrix}
			1 & 0 & -a_{12}^{-1} C^{\mathsf T} \\
			0 & 1 & \mathbf{0}^{\mathsf T} \\
			\mathbf{0} & \mathbf{0} & I_{n-1}
		\end{bmatrix},
	\end{equation}
	it follows that $A$ is congruent to
	\[
	\begin{bmatrix}
		0 & a_{12} \\
		a_{12} & 0
	\end{bmatrix}
	\oplus B.
	\]
	For any $i$ with $3 \le i \le n+1$, by an argument similar to that in~\eqref{eq1.2}, $A(i)$ is congruent to
	\[
	\begin{bmatrix}
		0 & a_{12} \\
		a_{12} & 0
	\end{bmatrix}
	\oplus B(i-2).
	\]
	Since $v_i$ is a $P$-vertex of $A$, we have
	$m_{A(i)}(0) = m_A(0) + 1$.
	Consequently, $m_{B(i-2)}(0) = m_B(0) + 1$, for all $i \in \{3,4,\ldots,n+1\}$. Therefore $P_\nu (B)=n-1$. Since $B\in \mathcal{Q}_{\mathrm{SYM}}(\mathcal{P'})$ is arbitrary, $\mathcal{P}'$ requires full $P$-vertices.
\end{proof}

The converse of the theorem may not be true.

\begin{eg}\rm
Consider the symmetric sign patterns $\mathcal{P}\cdot P_1$, $\mathcal{P}'$ whose underlying signed undirected graphs are given in Figure~\ref{fig1.1}.
\begin{figure}[H]
	\centering
	\tikzset{node distance=1cm}
	
	\begin{tikzpicture}[
		vertex/.style={draw, circle, inner sep=0pt, minimum size=.15cm, fill=black},
		edge/.style={thick}
		]
		\node[vertex, label=above:$v_2$] (v1) at (-3,0) {};
		\node[vertex, label=above:$v_3$] (v2) at (-2,1) {};
		\node[vertex, label=above:$v_4$] (v3) at (-1,1) {};
	\node[vertex, label=below:$v_5$] (v4) at (-1,-1) {};	 
	\node[vertex, label=below:$v_6$] (v5) at (-2,-1) {};	
		\node[vertex, label=above:$v_1$] (v6) at (-4,0) {};
		
		\draw[edge] (v1)--(v2);
		\draw[edge] (v2)--(v3);
		\draw[edge] (v3)--(v4);
		\draw[edge] (v4)--(v5);
		\draw[edge] (v5)--(v1);
		\draw[edge] (v1)--(v6);
		
		\draw[very thick,->] (0,0) -- (1,0);
		
		vertex/.style={draw, circle, inner sep=0pt, minimum size=.15cm, fill=black},
		edge/.style={thick}
		]
\node[vertex, label=above:$v_3$] (v1) at (2,0) {};
		\node[vertex, label=above:$v_4$] (v2) at (3,0) {};
		\node[vertex, label=above:$v_5$] (v3) at (4,0) {};
		\node[vertex, label=above:$v_6$] (v4) at (5,0) {};
		
		\draw[edge] (v1)--(v2);
		\draw[edge] (v2)--(v3);
		\draw[edge] (v4)--(v3);
	\end{tikzpicture}
	\caption{Underlying signed undirected graphs of $\mathcal{P}\cdot P_1$, $\mathcal{P}'$, respectively.} \label{fig1.1}
\end{figure}
Then the signed directed graph 
$
D(\mathcal{P}\cdot P_1)
$
has exactly one composite cycle, namely 
$
(v_1,v_2)(v_3,v_4)(v_5,v_6),
$
of length $6$. Moreover, $G(\mathcal{P})$ is a cycle of odd length. Hence, for every $A \in \mathcal{Q}_{\mathrm{SYM}}(\mathcal{P}\cdot P_1)$, both $A$ and $A(1)$ are nonsingular. Consequently, $v_1$ is not a $P$-vertex of $A$, and therefore, $\mathcal{P}\cdot P_1$ does not require full $P$-vertices. 
However, by Corollary~\ref{th0.1e}, $\mathcal{P}' = P_3$ requires full $P$-vertices.
\end{eg}

Next, consider the case of attaching a path of length two.

\begin{theorem} \label{th0.4e}
	Suppose that $\mathcal{P}\in \mathcal{Q}_n$ is a symmetric sign pattern with a $0$-diagonal. Then  $\mathcal{P}\cdot P_2$ requires full $P$-vertices if and only if $\mathcal{P}$ requires full $P$-vertices.
\end{theorem}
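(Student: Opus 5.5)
Label the new vertices so that $G(\mathcal{P}\cdot P_2)$ has the path $v_1 - v_2 - u$, where $u$ is the attachment vertex of $G(\mathcal{P})$. A matrix $A\in\mathcal{Q}_{\mathrm{SYM}}(\mathcal{P}\cdot P_2)$ then has the form
\[
A=\begin{bmatrix} 0 & a & 0 & \mathbf{0}^{\mathsf T}\\ a & 0 & b & \mathbf{0}^{\mathsf T}\\ 0 & b & & \\ \mathbf{0} & \mathbf{0} & \multicolumn{2}{c}{\smash{B}} \end{bmatrix},
\]
where $B\in\mathcal{Q}_{\mathrm{SYM}}(\mathcal{P})$ is arbitrary (with $u$ as its first index) and $a,b\neq 0$. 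The key reduction is the congruence used in \eqref{eq1.2}. We use the pendant edge $\{v_1,v_2\}$, and the row/column operations clear the only entry $b$ linking $\{v_1,v_2\}$ to $G(\mathcal{P})$. This gives $A\cong \begin{bmatrix}0&a\\a&0\end{bmatrix}\oplus B$, so $m_A(0)=m_B(0)$. The same congruence applies to $A(i)$ for every vertex $v_i$ of $G(\mathcal{P})$, including $u$, since $v_1,v_2$ and the entry $a$ survive. It yields $m_{A(i)}(0)=m_{B(i)}(0)$, with indices relabelled. Hence every vertex of $G(\mathcal{P})$ is a $P$-vertex of $A$ if and only if it is a $P$-vertex of $B$.

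It remains to handle $v_1$ and $v_2$, and this is where the path length two matters. First, $A(v_1)$ is the matrix with $v_2$ pendant at $u$, that is, a matrix in $\mathcal{Q}_{\mathrm{SYM}}(\mathcal{P}\cdot P_1)$. Its leaf $v_2$ is attached to $u$ by $b$, so the congruence of \eqref{eq1.2} with the pendant edge $\{v_2,u\}$ gives $A(v_1)\cong\begin{bmatrix}0&b\\b&0\end{bmatrix}\oplus B(u)$. Thus $m_{A(v_1)}(0)=m_{B(u)}(0)$, and $v_1$ is a $P$-vertex of $A$ exactly when $u$ is a $P$-vertex of $B$. Second, $A(v_2)=[0]\oplus B$, so $m_{A(v_2)}(0)=m_B(0)+1$, and $v_2$ is always a $P$-vertex.

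Combining these facts, $P_\nu(A)=n+2$ if and only if $P_\nu(B)=n$. Since the correspondence $A\mapsto B$ maps $\mathcal{Q}_{\mathrm{SYM}}(\mathcal{P}\cdot P_2)$ onto $\mathcal{Q}_{\mathrm{SYM}}(\mathcal{P})$, and every $B$ extends by choosing $a,b\neq0$, both directions of the equivalence follow. The step needing most care is the bookkeeping for $A(v_1)$: we must check that the reduction there uses $b$ as the pivot, with $v_2$ now the leaf, and that the deleted vertex is $u$ itself, which is what links $v_1$ to $u$. The remaining cases are direct applications of the congruence in the proof of Theorem~\ref{th1.1}.
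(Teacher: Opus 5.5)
Your proof is correct and is essentially the paper's argument: the same congruence $A\cong\begin{bmatrix}0&a\\a&0\end{bmatrix}\oplus B$, the same treatment of $A(v_1)$ using $b$ as the pivot, the same observation $A(v_2)=[0]\oplus B$, and the same reduction for the vertices of $G(\mathcal{P})$. The only difference is that the paper obtains the forward implication by citing Theorem~\ref{th1.1} for $(\mathcal{P}\cdot P_1)\cdot P_1$, whereas you read both directions off one vertex-by-vertex correspondence plus the fact that every $B$ extends to some $A$; this rests on the same congruence.
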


\begin{proof}  
	Suppose that $G(P_2)$ has vertex set $\{v_1, v_2, v_3\}$ and $G(\mathcal{P})$ has vertex set $\{v_3, v_4, \ldots, v_{n+2}\}$. Since $\mathcal{P}\cdot P_2=(\mathcal{P}\cdot P_1)\cdot P_1$, where $G(P_1)$ intersects with $G(\mathcal{P}\cdot P_1)$ at $v_2$. Hence, by Theorem~\ref{th1.1}, $\mathcal{P}\cdot P_2$
	requires full $P$-vertices implies that $\mathcal{P}$ requires full $P$-vertices.
	
	Conversely, suppose that $\mathcal{P}$ requires full $P$-vertices. Consider $\mathcal{P}\cdot P_2=[p_{ij}] \in \mathcal{Q}_{n+2}$, and let $A \in \mathcal{Q}_{\mathrm{SYM}}(\mathcal{P}\cdot P_2)$. Then $A$ can be written as
	\[
	A=\left[
	\begin{array}{cc|cccc}
		0 & a & 0 & 0 & \cdots & 0\\
		a & 0 & b & 0 & \cdots & 0\\ \hline
		0 & b &   &   &        &   \\
		0 & 0 &   &   &        &   \\
		\vdots & \vdots & &  & B & \\
		0 & 0 &   &   &        &
	\end{array}\right]
	= \left[
	\begin{array}{cc|c}
		0 & a & \mathbf{0}^{\mathsf T}\\
		a & 0 & C^{\mathsf T}\\ \hline
		\mathbf{0} & C & B
	\end{array}\right],
	\] where $B=[b_{ij}] \in \mathcal{Q}_\mathrm{SYM}(\mathcal{P})$, and $a$ and $b$ are nonzero real numbers such that $\sgn(a)=p_{12}$ and $\sgn(b)=p_{23}$.
	Then, by an argument similar to that in~\eqref{eq1.2}, $A$ is congruent to
	\begin{equation}\label{eq1.23}
		\begin{bmatrix}
			0 & a\\
			a & 0
		\end{bmatrix}
		\oplus B.    
	\end{equation}

	For $i=1$, \[
	A(1)=\left[
	\begin{array}{cc|cccc}
		0 & b & 0 & 0 & \cdots & 0\\
		b & b_{11} & b_{12} & b_{13} & \cdots & b_{1n}\\ \hline
		0 & b_{21} &   &   &        &   \\
		0 & b_{31} &   &   &        &   \\
		\vdots & \vdots & &  & B(1) & \\
		0 & b_{n1} &   &   &        &
	\end{array}\right]
	= \left[
	\begin{array}{cc|c}
		0 & b & \mathbf{0}^{\mathsf T}\\
		b & b_{11} & C_{1}^{\mathsf T}\\ \hline
		\mathbf{0} & C_{1} & B(1)
	\end{array}\right],
	\]
	\begin{equation}\label{eq1.22}
		\begin{bmatrix}
			0 & b & \mathbf{0}^{\mathsf T} \\
			b & b_{11} & \mathbf{0}^{\mathsf T} \\
			\mathbf{0} & \mathbf{0} & B(1)
		\end{bmatrix}
		=
		\begin{bmatrix}
			1 & 0 & \mathbf{0}^{\mathsf T} \\
			0 & 1 & \mathbf{0}^{\mathsf T} \\
			-b^{-1} C_{1} & \mathbf{0} & I_{n-1}
		\end{bmatrix}
		\begin{bmatrix}
			0 & b & \mathbf{0}^{\mathsf T} \\
			b & b_{11} & C_1^{\mathsf T} \\
			\mathbf{0} & C_1 & B(1)
		\end{bmatrix}
		\begin{bmatrix}
			1 & 0 & -b^{-1} C_1^{\mathsf T} \\
			0 & 1 & \mathbf{0}^{\mathsf T} \\
			\mathbf{0} & \mathbf{0} & I_{n-1}
		\end{bmatrix},
	\end{equation}    
	Therefore,  $A(1)$ is congruent to
	\[
	\begin{bmatrix}
		0 & b \\
		b & b_{11}
	\end{bmatrix}
	\oplus B(1).
	\]
	Since $v_3$ is a $P$-vertex of $B$, it follows that $
	m_{B(1)}(0) = m_B(0) + 1$.
	Also, both $\begin{bmatrix}
		0&a\\a&0
	\end{bmatrix}~\text{and}~\begin{bmatrix}
		0&b\\b&b_{11}
	\end{bmatrix}$ are nonsingular, hence from \eqref{eq1.23},
	$m_{A(1)}(0) = m_A(0) + 1.$  Therefore, $v_1$ is a $P$-vertex of $A$.
	
	For $i=2$, we have $A(2) = [0]\oplus B$. Hence, from~\eqref{eq1.23}, it follows that
	$m_{A(2)}(0) = m_A(0) + 1.$  Therefore, $v_2$ is a $P$-vertex of $A$.
	
	For every $3 \le i \le n+2$, by an argument similar to that in~\eqref{eq1.2}, the matrix
	$A(i)$ is congruent to
	$
	\begin{bmatrix}
		0 & a\\
		a & 0
	\end{bmatrix}
	\oplus B(i-2).
	$
	Since $v_i$ is a $P$-vertex of $B$, it follows that $
	m_{B(i-2)}(0) = m_B(0) + 1$. Hence from \eqref{eq1.23},
	$
	m_{A(i)}(0) = m_A(0) + 1$. Therefore, $v_i$ is a $P$-vertex of $A$, for $3\le i\le n+2$.
	
	Hence, $\mathcal{P}\cdot P_2$ requires full $P$-vertices.
\end{proof}

We now establish an interesting consequence of Theorem~\ref{th0.4e}.

\begin{theorem}\label{th0.6x}
	Let $\mathcal{P} \in \mathcal{Q}_n$ and $\mathcal{P}' \in \mathcal{Q}_m$ be two symmetric sign patterns with a $0$-diagonal, such that $\mathcal{P} = \mathcal{P}' \cdot P_{2k}$, for some $k \in \mathbb{N}$. Then $\mathcal{P}$ requires full $P$-vertices if and only if $\mathcal{P}'$ requires full $P$-vertices.
\end{theorem}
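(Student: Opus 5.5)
The plan is to argue by induction on $k$, using Theorem~\ref{th0.4e} as both the base case and the inductive step. The key structural observation is that attaching a path of length $2k$ to a vertex $u$ of $G(\mathcal{P}')$ is the same as attaching a path of length $2$ to the endpoint of a path of length $2(k-1)$ already attached at $u$. Precisely, suppose the attached path has vertices $u=w_0,w_1,\ldots,w_{2k}$. Then
\[
\mathcal{P}'\cdot P_{2k}=(\mathcal{P}'\cdot P_{2k-2})\cdot P_2,
\]
where the new copy of $P_2$ (on $w_{2k-2},w_{2k-1},w_{2k}$) is attached at the vertex $w_{2k-2}$ of $G(\mathcal{P}'\cdot P_{2k-2})$. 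For $k=1$ we interpret $\mathcal{P}'\cdot P_0=\mathcal{P}'$. All patterns involved are symmetric with a $0$-diagonal, since the new entries are only the off-diagonal path entries.

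The steps are as follows. First, for $k=1$, the statement is exactly Theorem~\ref{th0.4e} applied to $\mathcal{P}'$. Next, assume the claim holds for $k-1\ge 1$, and set $\mathcal{R}=\mathcal{P}'\cdot P_{2k-2}$, a symmetric sign pattern with a $0$-diagonal. By the decomposition above, $\mathcal{P}=\mathcal{R}\cdot P_2$. Theorem~\ref{th0.4e} places no restriction on the attachment vertex, so it gives that $\mathcal{P}$ requires full $P$-vertices if and only if $\mathcal{R}$ does. By the induction hypothesis, $\mathcal{R}$ requires full $P$-vertices if and only if $\mathcal{P}'$ does. Chaining the two equivalences completes the induction.

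The only point needing care is checking that Theorem~\ref{th0.4e} genuinely applies with the attachment vertex being the pendant endpoint $w_{2k-2}$ rather than a vertex of the original $G(\mathcal{P}')$. Its proof only uses the block form of $A$, where the vertex $v_3$ is the attachment point inside $B$, and that form is valid for any vertex of the base pattern. I would briefly note this relabeling, namely a permutation similarity, which does not affect $P$-vertices. I expect no real obstacle beyond this bookkeeping.
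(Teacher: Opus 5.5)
Your proof is correct and matches the paper's argument. The paper likewise writes $G(\mathcal{P})=G_1\cdot\{v_{2k-2},v_{2k-1}\}\{v_{2k-1},v_{2k}\}$, applies Theorem~\ref{th0.4e} to remove the last two vertices, and repeats this $k$ times; this is your induction on $k$ with $\mathcal{R}=\mathcal{P}'\cdot P_{2k-2}$, and your remark that Theorem~\ref{th0.4e} holds at any attachment vertex (up to permutation similarity) is exactly what that step uses.
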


\begin{proof}
Suppose that $G(P_{2k})=\{v_0,v_1\}\{v_1,v_2\}\cdots\{v_{2k-1},v_{2k}\}$ is the path from a vertex $v_0$ of $G(\mathcal{P}')$ to $v_{2k}$, where none of $v_1,v_2,\ldots,v_{2k}$ belongs to $V(G(\mathcal{P}'))$ (see Figure \ref{fig5.a}).

	\begin{figure}[H]
	\tikzset{node distance=1cm}
	\centering
	\begin{tikzpicture}
		\tikzstyle{vertex}=[draw, circle, inner sep=0pt, minimum size=.15cm, fill=black]
		\tikzstyle{edge}=[,thick]
		\node[vertex,label=below:$~~v_0$](v1)at(0,0){};
		\node[vertex,label=below:$v_1$](v7)at(1.5,0){};
		\node[vertex,label=below:$v_2$](v8)at(3,0){};
		\node[vertex,label=below:$v_{2k-3}$](v9)at(4.5,0){};
		\node[vertex,label=below:$v_{2k-2}$](v10)at(6,0){};
		\node[vertex,label=below:$v_{2k-1}$](v11)at(7.5,0){};
	\node[vertex,label=below:$v_{2k}$](v12)at(9,0){};
		\node[vertex](v13)at(-0.75,1){};
		\node[vertex](v14)at(-0.75,-1){};
	\node at (-1.5,0) {$G(\mathcal{P}')$};
	
		\draw[thick]   (0,1.5) to [bend left=35] node[left] {$ $} (0,-1.5);
		\draw[edge](v1)--node[]{$ $}(v7);
		\draw[edge](v7)--node[]{$ $}(v8);
		\draw[thick, dotted](v8)--node[]{$ $}(v9);
		\draw[edge](v9)--node[]{$ $}(v10);
		\draw[edge](v10)--node[]{$ $}(v11);
		\draw[edge](v12)--node[]{$ $}(v11);
	
		\draw[thick, dotted](v1)--node[]{$ $}(v13);
		\draw[thick, dotted](v1)--node[]{$ $}(v14);	
	\end{tikzpicture}
	\caption{$G(\mathcal{P})$}	
    \label{fig5.a}
\end{figure}
	Therefore,
	$
	G(\mathcal{P}) = G_1 \cdot \{v_{2k-2},v_{2k-1}\} \{v_{2k-1},v_{2k}\},
	$
	where $G_1 = G(\mathcal{P}) \setminus \{v_{2k-1}, v_{2k}\}$. By Theorem~\ref{th0.4e}, it follows that $\mathcal{P}$ requires full $P$-vertices if and only if $\mathcal{P}_1$ requires full $P$-vertices, where $\mathcal{P}_1$ is obtained from $\mathcal{P}$ by deleting the rows and columns corresponding to the vertices $v_{2k-1}$ and $v_{2k}$.
	
	Since the length of $P_{2k}$ is $2k$, continuing this process iteratively, after $k$ steps we obtain that $\mathcal{P}$ requires full $P$-vertices if and only if $\mathcal{P}'$ requires full $P$-vertices.
\end{proof}

Next, we consider the problem of attaching the path $P_k$ to $m$ distinct vertices of a cycle $C_n$ and determine when the resulting graph requires full $P$-vertices. 

\begin{defn}\cite[Definition~2.10]{2025}\rm~
Let $G \cdot mP_k$ denote the graph obtained by attaching a copy of $P_k$ to each of $m$ distinct vertices of $G$. These $m$ selected vertices of $G$ are called hosts.
\end{defn}

The following corollary is a consequence of successive applications of Theorem~\ref{th0.6x} together with Theorem~\ref{th0.2e}.
\begin{cor} \label{cor1x}
	Let $\mathcal{P}$ be a symmetric sign pattern with a $0$-diagonal, whose underlying signed undirected graph is $C_n\cdot mP_{2k}$, for $1\leq m\leq n$.
	If $n$ is even, and every composite cycle in the signed directed graph $D(\mathcal{P})$ of length $n$ containing the vertices of $C_n$ has the same sign, then $\mathcal{P}$ requires full $P$-vertices.
\end{cor}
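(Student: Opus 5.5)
We strip the pendant paths one at a time and reduce to the bare cycle, which Theorem~\ref{th0.2e} handles. Label the hosts $u_1,\dots,u_m$ on $C_n$. For each $j$, let $Q_j$ be the copy of $G(P_{2k})$ attached at $u_j$, so $G(\mathcal{P})$ is $C_n$ with $Q_1,\dots,Q_m$ hung at distinct cycle vertices. Define $\mathcal{P}_0$ as the principal subpattern of $\mathcal{P}$ on the vertices of $C_n$. More generally, for $0\le t\le m$, let $\mathcal{P}_t$ be the principal subpattern of $\mathcal{P}$ on $V(C_n)\cup V(Q_1)\cup\cdots\cup V(Q_t)$. Thus $\mathcal{P}_m=\mathcal{P}$, and $G(\mathcal{P}_t)=C_n\cdot tP_{2k}$ with hosts $u_1,\dots,u_t$. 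Every $\mathcal{P}_t$ is symmetric with a $0$-diagonal, since it is a principal subpattern of $\mathcal{P}$.

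The first step is to verify the base case. $G(\mathcal{P}_0)$ is the cycle $C_n$, and $n$ is even by hypothesis. The composite cycles of length $n$ in $D(\mathcal{P}_0)$ are exactly the composite cycles of length $n$ in $D(\mathcal{P})$ whose vertex set is $V(C_n)$. This holds because $D(\mathcal{P}_0)$ is the subdigraph of $D(\mathcal{P})$ induced on $V(C_n)$, with the same arc signs. By hypothesis these all have the same sign, so Theorem~\ref{th0.2e} shows that $\mathcal{P}_0$ requires full $P$-vertices.

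The inductive step uses $\mathcal{P}_t=\mathcal{P}_{t-1}\cdot P_{2k}$ for $1\le t\le m$. Here the path $Q_t$ is attached at the vertex $u_t$ of $G(\mathcal{P}_{t-1})$, and none of the other vertices of $Q_t$ lie in $G(\mathcal{P}_{t-1})$. This is exactly the setting of Theorem~\ref{th0.6x}, so $\mathcal{P}_t$ requires full $P$-vertices if and only if $\mathcal{P}_{t-1}$ does. Applying this $m$ times, starting from $\mathcal{P}_0$, shows that $\mathcal{P}=\mathcal{P}_m$ requires full $P$-vertices.

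The only point that needs care is the claim that $\mathcal{P}_t$ really is of the form $\mathcal{P}_{t-1}\cdot P_{2k}$ in the sense of Theorem~\ref{th0.6x}. That theorem allows $\mathcal{P}'$ to be any symmetric pattern with a $0$-diagonal, not just a cycle. It therefore applies to $\mathcal{P}_{t-1}$ even though $\mathcal{P}_{t-1}$ already carries earlier pendant paths. Because the hosts are distinct cycle vertices, the attachment vertex $u_t$ lies in $G(\mathcal{P}_{t-1})$ and the new path's other vertices are disjoint from it. I expect no further obstacle, since each reduction in the proof of Theorem~\ref{th0.4e} is a Schur-complement congruence that leaves the rest of the graph untouched.
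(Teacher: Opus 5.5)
Your proposal is correct and follows the paper's own route: Corollary~\ref{cor1x} is obtained by applying Theorem~\ref{th0.6x} once for each of the $m$ pendant copies of $P_{2k}$, which reduces the problem to the bare cycle $C_n$, and then invoking Theorem~\ref{th0.2e}. Your two explicit checks fill in details the paper leaves implicit: that the length-$n$ composite cycles of $D(\mathcal{P}_0)$ are exactly those of $D(\mathcal{P})$ on $V(C_n)$, and that Theorem~\ref{th0.6x} applies to a $\mathcal{P}_{t-1}$ which already carries earlier pendant paths.
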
	

In the following auxiliary result, we consider attaching a tree to a vertex of a cycle $C_n$ and determine when the resulting graph requires full $P$-vertices.

\begin{defn}\rm
	Suppose that $\mathcal{P}$ is a symmetric sign pattern whose underlying signed undirected graph is $G(\mathcal{P})$. 
	The distance between two vertices $u,v \in V(G(\mathcal{P}))$, denoted by $\dist(u,v)$, is defined as the minimum length among all paths from $u$ to $v$ in $G(\mathcal{P})$. 
	Furthermore, let $C_k = u_1u_2\cdots u_k$ be a cycle in $G(\mathcal{P})$. Then, the distance from a vertex $u \in V(G(\mathcal{P}))$ to the cycle $C_k$ is defined by
		\[
		\dist(u, C_k) = \min\{\dist(u, u_i) : i = 1,2,\ldots,k\}.
		\]
\end{defn}

\begin{theorem} \label{th0.6e}
	Let $\mathcal{P}$ be a symmetric sign pattern with a $0$-diagonal, whose underlying signed undirected graph is $G(\mathcal{P})$ and signed directed graph is $D(\mathcal{P})$.
	Suppose that $G(\mathcal{P})$ contains exactly one cycle $C$ which is of even length, and every composite cycle in $D(\mathcal{P})$ of length $l(C)$ and containing the vertices of $C$ has the same sign.
	If the distance from $C$ to every leaf of $G(\mathcal{P})$ is even, and for any vertex $u \in V(G(\mathcal{P})) \setminus V(C)$ with $\dist(u,C)$ odd, has $\deg(u)=2$ in $G(\mathcal{P})$, then $\mathcal{P}$ requires full $P$-vertices.
\end{theorem}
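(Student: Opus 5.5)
Under the hypotheses, $G(\mathcal{P})$ is the even cycle $C$ with trees hanging off it. The plan is to prune these trees by repeatedly deleting a pendant path of length two with Theorem~\ref{th0.4e}, until only $C$ is left, and then apply Theorem~\ref{th0.2e}. We argue by induction on $|V(G(\mathcal{P}))\setminus V(C)|$. In the base case $G(\mathcal{P})=C$, and the hypotheses are exactly those of Theorem~\ref{th0.2e}, so $\mathcal{P}$ requires full $P$-vertices.

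For the inductive step, assume $G(\mathcal{P})\neq C$. Among the vertices outside $C$, choose a leaf $w$ whose distance to $C$ is maximal; then $\dist(w,C)=2t\ge 2$. Let $x$ be the neighbour of $w$ and $y$ the neighbour of $x$ on the path towards $C$. Since $\dist(x,C)=2t-1$ is odd, the hypothesis gives $\deg(x)=2$, so $x$ has no neighbours other than $w$ and $y$. Hence $G(\mathcal{P})=G_1\cdot P_2$, where $G_1=G(\mathcal{P})\setminus\{w,x\}$ and the path $\{y,x\}\{x,w\}$ is attached at $y$. Let $\mathcal{P}_1$ be the pattern obtained from $\mathcal{P}$ by deleting the rows and columns of $w$ and $x$. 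Theorem~\ref{th0.4e} then says $\mathcal{P}$ requires full $P$-vertices if and only if $\mathcal{P}_1$ does.

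It remains to check that $\mathcal{P}_1$ satisfies the hypotheses, so that induction applies. Deleting $w$ and $x$ does not change $C$ or any distance to $C$ for the remaining vertices: these vertices lie in a tree attached to $C$, and the removed vertices sit beyond $y$, so no shortest path to $C$ used them. The composite cycles of length $l(C)$ on $V(C)$ involve only vertices of $C$, so they are unchanged. The degree condition for vertices at odd distance still holds for every vertex except possibly $y$, whose degree drops by one. But $\dist(y,C)=2t-2$ is even, so $y$ is not constrained by that condition. For the leaf condition, the only possible new leaf is $y$. If $y$ becomes a leaf, it is at even distance $2t-2$, and if $t=1$ then $y\in C$ and is not a leaf. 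The remaining leaves are old leaves with even distance. So $\mathcal{P}_1$ satisfies all hypotheses with fewer non-cycle vertices, and the induction closes.

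The main obstacle is precisely this verification that $\mathcal{P}_1$ still satisfies the hypotheses, in particular that the degree-two condition at odd distance and the even-leaf-distance condition survive the deletion. One should also check that the choice of a farthest leaf guarantees that $x$ lies on no other branch, which follows from $\deg(x)=2$. One must also note that Theorem~\ref{th0.4e} applies to an arbitrary symmetric $0$-diagonal pattern $\mathcal{P}_1$, including ones with cycles; its proof did not use any tree assumption, so this is fine.
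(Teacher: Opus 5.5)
Your proposal is correct and takes essentially the same route as the paper: peel off pendant paths of length two via Theorem~\ref{th0.4e} until only $C$ remains, then apply Theorem~\ref{th0.2e}; the degree-two condition at odd distance is what makes such a pendant path available. The only difference is bookkeeping---you induct on the number of vertices outside $C$ rather than on the number of leaves, which avoids the paper's separate base case via Theorem~\ref{th0.6x} and its ``repeat if $v_{2r}$ becomes a leaf'' step, and you explicitly check that the hypotheses pass to $\mathcal{P}_1$, which the paper leaves implicit.
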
	

\begin{proof}
	If $G(\mathcal{P})$ has no leaf, then by Theorem~\ref{th0.2e}, 
	$\mathcal{P}$ requires full $P$-vertices. 
	Suppose that $G(\mathcal{P})$ has at least one leaf. We prove the result by induction on $m$, the number of leaves of $G(\mathcal{P})$.
	For $m=1$, we have $G(\mathcal{P})=C\cdot P_{2k}$,
	for some $k\in\mathbb{N}$. By applying Theorem~\ref{th0.6x}, we obtain that $\mathcal{P}$ requires full $P$-vertices if and only if $C$ requires full $P$-vertices. 
	Therefore, by Theorem~\ref{th0.2e}, $\mathcal{P}$ requires full $P$-vertices.
	
	Suppose that the result holds for all $m \le t-1$, where $t \ge 2$. For $m=t$, let $u$ be a leaf of $G(\mathcal{P})$, then $\dist(u,C)$ is even. Suppose that $R=\{v_0,v_1\}\{v_1,v_2\}\cdots\{v_{2r},v\}\{v,u\}$ is a path from a vertex $v_0$ of $C$ to $u$, where none of $v_1,v_2,\ldots,v_{2r},v$ belongs to $V(C)$. 
	Since $\dist(v,C)$ is odd, it follows from the hypothesis that $\deg(v)=2$ in $G(\mathcal{P})$.
	
	Therefore, $G(\mathcal{P}) = G_1 \cdot \{v_{2r},v\}\{v,u\}$,
	where $G_1 = G(\mathcal{P}) \setminus \{v,u\}$. By Theorem~\ref{th0.4e}, it follows that $\mathcal{P}$ requires full $P$-vertices if and only if $\mathcal{P}_1$ requires full $P$-vertices, where $\mathcal{P}_1$ is obtained from $\mathcal{P}$ by deleting the rows and columns corresponding to the vertices $u$ and $v$ (see Figure \ref{fig6.a}).

	\begin{figure}[H]
		\tikzset{node distance=1cm}
		\centering
		\begin{tikzpicture}
			\tikzstyle{vertex}=[draw, circle, inner sep=0pt, minimum size=.15cm, fill=black]
			\tikzstyle{edge}=[,thick]
			\node at (-1.5,0) {$C$};
			\node[vertex,label=below:$~v_0$](v1)at(0,0){};
			\node[vertex,label=below:$v_1$](v7)at(1,0){};
			\node[vertex,label=below:$v_2$](v8)at(2,0){};
			\node[vertex,label=below:$v_{2r}$](v9)at(3,0){};
			\node[vertex,label=below:$v$](v10)at(4,0){};
			\node[vertex,label=below:$u$](v11)at(5,0){};
			\node[vertex](v12)at(5,1){};
			\node[vertex](v13)at(-0.75,1){};
			\node[vertex](v14)at(-0.75,-1){};
			\node[vertex] (v16) at (4,1) {};
			\node[vertex](v17)at(5,-1){};
			\node[vertex] (v18) at (4,-1) {};
			\draw[thick]   (0,1.5) to [bend left=35] node[left] {$ $} (0,-1.5);
			\draw[edge](v1)--node[]{$ $}(v7);
			\draw[edge](v7)--node[]{$ $}(v8);
			\draw[thick, dotted](v8)--node[]{$ $}(v9);
			\draw[edge](v9)--node[]{$ $}(v10);
			\draw[edge](v10)--node[]{$ $}(v11);
			\draw[thick, dotted](v16)--node[]{$ $}(v12);
			\draw[thick, dotted](v1)--node[]{$ $}(v13);
			\draw[thick, dotted](v1)--node[]{$ $}(v14);
			\draw[thick, dotted](v9)--node[]{$ $}(v16);
			\draw[thick, dotted](v9)--node[]{$ $}(v18);
			\draw[thick, dotted](v17)--node[]{$ $}(v18);
			
		\end{tikzpicture}
		\caption{$G(\mathcal{P})$}
        \label{fig6.a}
	\end{figure}

	If $v_{2r}$ is not a leaf of $G_1$, then $G_1$ has $t-1$ leaves. Hence, by the induction hypothesis, the result follows.

	If $v_{2r}$ is a leaf of $G_1$, then the previous argument applies with $G(\mathcal{P})$ replaced by $G_1$ and $u$ replaced by $v_{2r}$. 
	After finitely many steps, the above process of deleting edges gives a signed undirected graph $G_s$ with fewer than $t$ leaves. Hence, by the induction hypothesis, the result follows.
\end{proof}

The following examples show that the conditions in Theorem~\ref{th0.6e} are necessary.
If there exists a vertex $u \in V(G(\mathcal{P})) \setminus V(\mathcal{C})$ such that 
$\dist(u,\mathcal{C})$ is odd and $\deg(u) \neq 2$, then $\mathcal{P}$ may not require full $P$-vertices.

\begin{eg}\rm
	Let $\mathcal{P}\in \mathcal{Q}_{7}$ be a symmetric sign pattern with a $0$-diagonal, whose underlying signed undirected graph $G(\mathcal{P})$ is  given in Fig.\ref{xnfig4.1a.},\\
		\begin{figure}[H]
		\centering
		\begin{minipage}{.45\textwidth}
			\vspace{-1cm}
			\[
			\mathcal{P}=\begin{bmatrix}
				0 & 0 & + & 0 & 0 & 0 & 0 \\
				0 & 0 & + & 0 & 0 & 0 & 0 \\
				+ & + & 0 & + & 0 & 0 & 0 \\
				0 & 0 & + & 0 & + & 0 & - \\
				0 & 0 & 0 & + & 0 & + & 0 \\
				0 & 0 & 0 & 0 & + & 0 & + \\
				0 & 0 & 0 & - & 0 & + & 0
			\end{bmatrix}
			\]
		\end{minipage}
		\hspace{0.05\textwidth}
		\begin{minipage}{.45\textwidth}
			\centering
		\tikzset{
			vertex/.style={draw, circle, inner sep=0pt, minimum size=0.15cm, fill=black},
			edge/.style={thick}
		}
		\begin{tikzpicture}
			
		\node[vertex, label=above:$u_6$] (v2)  at (-2,1.25) {};
		\node[vertex, label=above:$u_7$] (v1)  at (0,1.25) {};
		\node[vertex, label=below:$u_5$] (v3)  at (-2,0) {};
		\node[vertex, label=below:$u_4$] (v4)  at (0,0) {};
		\node[vertex, label=below:$u_3$] (v5)  at (1.5,0) {};
		\node[vertex, label=below:$u_2$] (v6)  at (3,0) {};
		\node[vertex, label=above:$u_1$] (v7)  at (3,1.25) {};

			\draw[edge] (v1)--node[above]{$ $}(v2);
			\draw[edge] (v2)--node[left]{$ $}(v3);
			\draw[edge] (v3)--node[below]{$ $}(v4);
			\draw[edge] (v4)--node[below]{$ $}(v5);
			\draw[edge] (v1)--node[right]{$ $}(v4);
			
			\draw[edge] (v5)--node[below]{$ $}(v6);
			\draw[edge] (v5)--node[below]{$ $}(v7);
			
		\end{tikzpicture}
		\caption{ $G(\mathcal{P})$.}
		\label{xnfig4.1a.}
		\end{minipage}
	\end{figure}
Then $G(\mathcal{P})$ contains exactly one cycle $C = u_4u_5u_6u_7$ of length $4$, and every composite cycle of length $4$ in the signed directed graph $D(\mathcal{P})$ containing the vertices of $C$ has the same sign. 
Also, the distance from $C$ to every leaf of $G(\mathcal{P})$ is even, and there exists a vertex $u_3 \in V(G(\mathcal{P})) \setminus V(C)$ such that $\dist(u_3,C)$ is odd and $\deg(u_3) \neq 2$. 

Since, every composite cycle of length $6$ in $D(\mathcal{P}) \setminus \{u_1\}$ has the same sign. Therefore, for all $A \in \mathcal{Q}_{\mathrm{SYM}}(\mathcal{P})$, $\det(A(1)) \neq 0$, and hence $m_{A(1)}(0)=0$. 
Thus, $m_{A(1)}(0) < m_A(0)+1$, and therefore $u_1$ is not a $P$-vertex of $A$. Consequently, $\mathcal{P}$ does not require full $P$-vertices.
	
\end{eg}

The following example shows that if the distance from the cycle to any leaf is odd, then $\mathcal{P}$ need not require full $P$-vertices.
\begin{eg}\label{egx1}\rm
	Consider the sign pattern
	$$\mathcal{P}=\begin{bmatrix}
		0 & + & 0 & 0 & 0  \\
		+ & 0 & + & 0 & -  \\
		0 & + & 0 & + & 0  \\
		0 & 0 & + & 0 & +  \\
		0 & - & 0 & + & 0 
	\end{bmatrix}$$
	The underlying signed undirected graph of $\mathcal{P}$ and $\mathcal{P}'$ are given in Figure~\ref{fig1.21}, where $\mathcal{P}'$ is obtained from $\mathcal{P}$ by deleting rows and columns corresponding to $v_1,v_2$.
	\begin{figure}[H]
		\centering
		\tikzset{node distance=1cm}
		
		\begin{tikzpicture}[
			vertex/.style={draw, circle, inner sep=0pt, minimum size=.15cm, fill=black},
			edge/.style={thick}
			]
			\node[vertex, label=above:$v_2$] (v1) at (-3,0) {};
			\node[vertex, label=above:$v_3$] (v2) at (-2,1) {};
			\node[vertex, label=right:$v_4$] (v3) at (-1,0) {};
			\node[vertex, label=below:$v_5$] (v5) at (-2,-1) {};	
			\node[vertex, label=above:$v_1$] (v6) at (-4,0) {};
			
			\draw[edge] (v1)--(v2);
			\draw[edge] (v2)--(v3);
			\draw[edge] (v3)--(v5);
			\draw[edge] (v1)--(v5);
			\draw[edge] (v1)--(v6);
			
			\draw[very thick,->] (0,0) -- (1,0);
			
			vertex/.style={draw, circle, inner sep=0pt, minimum size=.15cm, fill=black},
			edge/.style={thick}
			]
			\node[vertex, label=above:$v_3$] (v2) at (3,0) {};
			\node[vertex, label=above:$v_4$] (v3) at (4,0) {};
			\node[vertex, label=above:$v_5$] (v4) at (5,0) {};
			
			\draw[edge] (v2)--(v3);
			\draw[edge] (v4)--(v3);
		\end{tikzpicture}
		\caption{Underlying signed undirected graph of $\mathcal{P}$, $\mathcal{P}'$, respectively.} \label{fig1.21}
	\end{figure}
Then $G(\mathcal{P})$ has exactly one cycle $C=v_2v_3v_4v_5$ of length $4$, and every composite cycle of length $4$ in the signed directed graph $D(\mathcal{P})$ containing the vertices of $C$ has the same sign. The distance from $C$ to the leaf $v_1$ is odd. 
If $\mathcal{P}$ requires full $P$-vertices, then Theorem~\ref{th1.1} implies that $\mathcal{P}'$ requires full $P$-vertices. However, since $\mathcal{P}'=P_2$, Corollary~\ref{th0.1e} shows that $\mathcal{P}'$ does not require full $P$-vertices, a contradiction. 
Hence, $\mathcal{P}$ does not require full $P$-vertices.
\end{eg}

The next example shows that if the cycle length is odd, then $\mathcal{P}$ may not require full $P$-vertices.

\begin{eg}\rm \label{eg0.3e}
	Let $\mathcal{P}\in \mathcal{Q}_{5}$ be a symmetric sign pattern with a $0$-diagonal, whose underlying signed undirected graph $G(\mathcal{P})$ is  given in Fig.\ref{xnfig4.1},\\
		\begin{figure}[H]
		\centering
		\begin{minipage}{.45\textwidth}
			\vspace{-1cm}
			\[
			\mathcal{P}=\begin{bmatrix}
				0 & + & 0 & 0 & 0  \\
				+ & 0 & + & 0 & 0  \\
				0 & + & 0 & + & +  \\
				0 & 0 & + & 0 & +  \\
				0 & 0 & + & + & 0 
			\end{bmatrix}
			\]
		\end{minipage}
		\hspace{0.05\textwidth}
		\begin{minipage}{.45\textwidth}
		\centering
		\tikzset{node distance=1cm}
		\begin{tikzpicture}
			\tikzstyle{vertex}=[draw, circle, inner sep=0pt, minimum size=0.15cm, fill=black]
			\tikzstyle{edge}=[thick]
	\node[vertex, label=above:$u_5$](v2) at (-1,1.25) {};
	\node[vertex, label=below:$u_4$](v3) at (-2,0) {};
	\node[vertex, label=below:$u_3$](v4) at (0,0) {};
	\node[vertex, label=below:$u_2$] (v5)  at (1.5,0) {};
	\node[vertex, label=below:$u_1$] (v6)  at (3,0) {};

			\draw[edge] (v2)--node[left]{$ $} (v3);
			\draw[edge] (v2)--node[right]{$ $} (v4);
			\draw[edge] (v3)--node[below]{$ $} (v4);
			
			\draw[edge] (v4)--node[below]{$ $}(v5);
			
			\draw[edge] (v5)--node[below]{$ $}(v6);
			
		\end{tikzpicture}
		\caption{ $G(\mathcal{P})$.}
		\label{xnfig4.1}
		\end{minipage}
	\end{figure}
Then $G(\mathcal{P})$ contains exactly one cycle $C = u_3u_4u_5$ of length $3$, and the distance from $C$ to the leaf of $u_1$ is even. 
The signed directed graph $D(\mathcal{P}) \setminus \{u_1\}$ has exactly one composite cycle $(u_2,u_3)(u_4,u_5)$ of length $4$. Therefore, for all $A \in \mathcal{Q}_{\mathrm{SYM}}(\mathcal{P})$, $\det(A(1)) \neq 0$, and hence $m_{A(1)}(0)=0$. 
Thus, $m_{A(1)}(0) < m_A(0)+1$, and therefore $u_1$ is not a $P$-vertex of $A$. Consequently, $\mathcal{P}$ does not require full $P$-vertices.
\end{eg}

Although the condition in Theorem~\ref{th0.6e} does not hold in general when the distance from the cycle to a leaf is odd, as illustrated in Example~\ref{egx1}. We now investigate the conditions under which it remains valid.
We now consider the signed undirected graph obtained by attaching $m$ pendant vertices to a cycle $C_n$.

\begin{theorem}\label{xth3.5}
Let $\mathcal{P}\in \mathcal{Q}_{n+m}$ be a symmetric sign pattern with a $0$-diagonal, whose underlying signed undirected graph is $G(\mathcal{P}) = C_n \cdot mP_1$, where $1 < m \leq n$. 
Then $\mathcal{P}$ requires full $P$-vertices if and only if the distance between any two consecutive hosts is odd.
\end{theorem}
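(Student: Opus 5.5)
The plan is to reduce everything to disjoint unions of $0$-diagonal paths, where Proposition~\ref{1.11pp} gives the nullity exactly and independently of the entries. The tool for this is the pendant-elimination congruence~\eqref{eq1.2} from the proof of Theorem~\ref{th1.1}. Let $w$ be a pendant vertex attached to a host $h$, and let $a=a_{wh}$. Using the nonzero entry $a$ to clear the rest of row and column $h$, we get that any symmetric matrix $M$ containing $w,h$ in this configuration is congruent to $\begin{bmatrix}0&a\\a&0\end{bmatrix}\oplus M(\{w,h\})$. Hence $m_M(0)=m_{M(\{w,h\})}(0)$.

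Label the hosts $h_1,\dots,h_m$ in cyclic order, with pendants $w_1,\dots,w_m$. Let $d_j=\dist(h_j,h_{j+1})$ along $C_n$ (indices mod $m$). Since $m>1$, the arc from $h_j$ to $h_{j+1}$ has exactly $d_j-1$ interior vertices.

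\textbf{Step 1: nullity of $A$.} Take any $A\in\mathcal{Q}_{\mathrm{SYM}}(\mathcal{P})$ and eliminate all $m$ pendant--host pairs. The result is congruent to a nonsingular block plus a direct sum of $0$-diagonal tridiagonal matrices, one for each arc, of orders $d_j-1$. By Proposition~\ref{1.11pp}, $m_A(0)=k$, where $k$ is the number of indices $j$ with $d_j$ even. Signs play no role here, since only paths remain.

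\textbf{Step 2: each type of deleted vertex.} I treat the three kinds of vertices in the same way.
\begin{itemize}
\item Deleting a host $h_j$ leaves $w_j$ isolated. Then $A(h_j)=[0]\oplus A(\{w_j,h_j\})$, and eliminating the other pairs gives $m_{A(h_j)}(0)=1+k-[d_{j-1}\text{ even}]-[d_j\text{ even}]+[d_{j-1}+d_j\text{ even}]$. Here $[\cdot]$ is the indicator, and the last term comes from the path of $d_{j-1}+d_j-2$ interior arc vertices left after removing $h_j$. When $d_{j-1}$ and $d_j$ are both odd, this equals $k+1$ (I also expect this case to be all that is needed for hosts).
\item Deleting a pendant $w_j$ leaves $h_j$ as an interior vertex of the merged arc. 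Eliminating the remaining $m-1$ pairs gives
\[
m_{A(w_j)}(0)=k-[d_{j-1}\text{ even}]-[d_j\text{ even}]+[d_{j-1}+d_j\text{ even}],
\]
because the merged path has $(d_{j-1}-1)+(d_j-1)+1$ vertices.
\item Deleting an interior vertex $v$ of the $j$-th arc splits that arc into two paths with $d_j-2$ vertices in total. Their contribution to the nullity is $1$ if $d_j$ is odd and $0$ or $2$ if $d_j$ is even, determined by where $v$ sits.
\end{itemize}

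\textbf{Sufficiency.} If every $d_j$ is odd, then $k=0$. In the three cases above, each deletion yields exactly one odd-order path component (or one isolated zero for a host deletion), so $m_{A(v)}(0)=1=m_A(0)+1$ for every vertex $v$ and every $A$. Hence $\mathcal{P}$ requires full $P$-vertices.

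\textbf{Necessity.} Suppose some $d_j$ is even, and choose a host $h$ incident to such an arc; one exists because $m>1$. Let $w$ be its pendant. The formula for $m_{A(w)}(0)$ gives $k-1$ whether the other incident arc is even (then $k-2+1$) or odd (then $k-1+0$). Since $m_A(0)=k$, the vertex $w$ is never a $P$-vertex, for every $A$, so $\mathcal{P}$ does not require full $P$-vertices.

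The main obstacle is bookkeeping rather than anything conceptual. One must check that the elimination congruence remains valid after other vertices have already been deleted: the pendant pair is still present, and $h$ may now have degree $1$ or $2$. One must also get the arc vertex counts right when $m=n$ or when arcs have length $1$; in particular, for $d_j=1$ the arc contributes an empty path. These checks are where I would be most careful.
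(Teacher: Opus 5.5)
Your proof is correct, and it takes a genuinely different route from the paper's.

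\textbf{The paper's route.} For sufficiency the paper uses the determinant expansion. The pendants force the host--pendant $2$-cycles, so $D(\mathcal{P})$ has a unique composite cycle of length $n+m$, and every $A\in\mathcal{Q}_{\mathrm{SYM}}(\mathcal{P})$ is nonsingular. Because $n\equiv m \pmod 2$, each $A(i)$ has odd order and admits no spanning composite cycle (the odd cycle $C_n$, when present, cannot coexist with the remaining pendant $2$-cycles). Hence $\det A(i)=0$, and the bound $|m_{A(i)}(0)-m_A(0)|\le 1$ gives $m_{A(i)}(0)=1$. For necessity the paper argues by contradiction. It deletes a host and its pendant via Theorem~\ref{th1.1}, then applies Theorem~\ref{thx1} to the resulting tree; the matching forced along the even arc leaves the next host's pendant uncovered.

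\textbf{Your route.} Repeated pendant elimination computes every relevant nullity exactly from Proposition~\ref{1.11pp}, with $m_A(0)=k$ (the number of even arcs) for all $A$. This treats both directions uniformly and proves more. When some arc is even, the pendant at an incident host fails to be a $P$-vertex of \emph{every} $A\in\mathcal{Q}_{\mathrm{SYM}}(\mathcal{P})$, since its deletion drops the nullity to $k-1$. The paper only shows that some $A$ fails. The paper's version is shorter because it reuses its earlier theorems.

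\textbf{A correction.} Your host-deletion bullet is wrong. Deleting $h_j$ does not merge the interiors of arcs $j-1$ and $j$. Their connection through $h_j$ is gone, and their connection around the other side runs through the remaining hosts, which your eliminations remove. So all $m$ arc interiors stay separate, and $m_{A(h_j)}(0)=1+k$ for every $A$; that is, hosts are always $P$-vertices.

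The stated formula is also internally inconsistent. Under your merged-path picture the indicator should be $[d_{j-1}+d_j\ \text{odd}]$. As written, it gives $k+2$ when both arcs are odd, contradicting the value $k+1$ you claim.

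This does not break the proof. Sufficiency uses only the all-odd case, where the true value $1+k=1$ matches what you actually use (the single zero from the isolated $w_j$). Necessity uses only pendant deletions, which you computed correctly. Just replace the bullet with the corrected count.
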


\begin{proof}
    	Let $C_n=v_1v_2\cdots v_n$ and $v_{n+1},v_{n+2},...,v_{n+m}$ be the pendent vertices attached to the vertices $v_{i_1},v_{i_2},...,v_{i_m}$ of $C_n$, respectively (see Figure \ref{fig7.a}).\\
		\begin{figure}[H]
		\centering
		\tikzset{node distance=1cm}
		\begin{tikzpicture}[
			vertex/.style={draw, circle, inner sep=0pt, minimum size=.15cm, fill=black},
			edge/.style={thick}
			]
		\node[vertex, label=above:$v_{i_2}$] (v0) at (0,0) {};
		\node[vertex, label=above:$v_{i_1}$] (v1) at (-3,0) {};
			\node[vertex] (v2) at (-2,1) {};
			\node[vertex] (v3) at (-1,1) {};
   	\node[vertex, label=right:$v_{i_3}$] (v4) at (-1,-1){};	 
			\node[vertex] (v5) at (-2,-1) {};	
		\node[vertex, label=above:$v_{n+1}$] (v6) at (-4,0) {};
		\node[vertex, label=above:$v_{n+2}$] (v7) at (1,0) {};
		\node[vertex, label=right:$v_{n+3}$] (v8) at (0,-1.75) {};
			
			\draw[edge] (v3)--(v0);
			\draw[edge, ultra thick, dotted] (v4)--(v0);
			\draw[edge] (v1)--(v2);
			\draw[edge, ultra thick, dotted] (v2)--(v3);
			\draw[edge, ultra thick, dotted] (v4)--(v5);
			\draw[edge] (v5)--(v1);
			\draw[edge] (v1)--(v6);
			\draw[edge] (v0)--(v7);
			\draw[edge] (v4)--(v8);

		\end{tikzpicture}
		\caption{$C_n \cdot mP_1$}
        \label{fig7.a}
	\end{figure}
First, suppose that the distance between any two consecutive hosts is odd. Let $D(\mathcal{P})$ be the underlying signed directed graph of $\mathcal{P}$. Consider
\[
\bar{G} = G(\mathcal{P}) \setminus \{v_{i_1},v_{i_2},\ldots,v_{i_m},v_{n+1},v_{n+2},\ldots,v_{n+m}\}.
\]
Then each connected component of $\bar{G}$ is a path of odd length. Suppose that $\bar{D}$ is the signed directed graph corresponding to $\bar{G}$. Therefore, $\bar{D}$ has a unique composite cycle of length $n-m$, say $\Gamma_1$. 
Let
\[
\Gamma = \Gamma_1 (v_{i_1},v_{n+1}) (v_{i_2},v_{n+2}) \cdots (v_{i_m},v_{n+m}).
\]
Then $\Gamma$ is the unique composite cycle of length $n+m$ in $D(\mathcal{P})$. Hence, for all $A \in \mathcal{Q}_{\mathrm{SYM}}(\mathcal{P})$, we have $m_A(0)=0$.

Since the distance between any two consecutive hosts is odd, $n$ is even (odd, respectively) implies that $m$ is even (odd, respectively). Therefore, $n+m$ is even.
For $1 \leq i \leq n+m$, let $D_i = D(\mathcal{P}) \setminus \{v_i\}$, then $D_i$ has odd order. Moreover, the only simple cycles of odd length in $D_i$ are $(v_1,v_2,\ldots,v_n)$ and $(v_1,v_n,\ldots,v_2)$. Hence, $D_i$ does not contain any composite cycle of length $n+m-1$, which implies that $\det(A(i))=0$.
Since $|m_{A(i)}(0) - m_A(0)| \leq 1$, it follows that $m_{A(i)}(0) = m_A(0)+1$. Therefore, $v_i$ is a $P$-vertex of $A$. Hence, $\mathcal{P}$ requires full $P$-vertices.

For the converse, suppose that $G(\mathcal{P})$ has two consecutive hosts $v_{i_1}$ and $v_{i_2}$ such that $\dist(v_{i_1},v_{i_2})$ is even. Without loss of generality, let
\[
R = \{v_{i_1},v_{i_1+1}\}\{v_{i_1+1},v_{i_1+2}\}\cdots \{v_{i_1+2k+1},v_{i_2}\}
\]
be the path from $v_{i_1}$ to $v_{i_2}$ of even length.

Assume that $\mathcal{P}$ requires full $P$-vertices. Then, by Theorem~\ref{th1.1}, $\mathcal{P}'$ requires full $P$-vertices, where $\mathcal{P}'$ is obtained from $\mathcal{P}$ by deleting the rows and columns corresponding to the vertices $v_{i_1}$ and $v_{n+1}$. Let $G' = G(\mathcal{P}) \setminus \{v_{i_1},v_{n+1}\}$ be the underlying graph of $\mathcal{P}'$.
Since $G'$ is a tree, by Theorem~\ref{thx1}, it has a perfect matching, say $\mathcal{M}'$. Hence,
\[
\{v_{i_1+1},v_{i_1+2}\}, \{v_{i_1+3},v_{i_1+4}\}, \ldots, \{v_{i_1+2k+1},v_{i_2}\} \in \mathcal{M}'.
\]
This implies that $v_{n+2} \notin V(\mathcal{M}')$, which contradicts the fact that $\mathcal{M}'$ is a perfect matching in $G'$. 

Therefore, $\mathcal{P}$ does not require full $P$-vertices.
\end{proof}

We establish an important consequence of Theorem~\ref{xth3.5}. For this purpose, we introduce the following notation.

\begin{defn}\cite[Definition~2.12]{2025}\rm~ 
The corona product $\tilde{C}_n = C_n \odot P_1$ of the cycle $C_n$ by attaching $P_1$ to each vertex of $C_n$.
\end{defn}

\begin{cor}
	Let $\mathcal{P}\in \mathcal{Q}_{2n}$ be a symmetric sign pattern with a $0$-diagonal, whose underlying signed undirected graph is $\tilde{C}_n = C_n \odot P_1$. Then $\mathcal{P}$ requires full $P$-vertices.
\end{cor}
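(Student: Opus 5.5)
The plan is to obtain this as a direct specialization of Theorem~\ref{xth3.5}. The graph $\tilde{C}_n = C_n\odot P_1$ is exactly $C_n\cdot mP_1$ with $m=n$: every vertex of $C_n=v_1v_2\cdots v_n$ is a host, and the pendant vertex $v_{n+i}$ is attached to $v_i$. Since a cycle has $n\ge 3$, we have $1<m=n\le n$, so the hypothesis on $m$ in Theorem~\ref{xth3.5} is met. Consecutive hosts along $C_n$ are the adjacent vertices $v_i$ and $v_{i+1}$ (indices mod $n$), so their distance is $1$, which is odd. Theorem~\ref{xth3.5} then gives at once that $\mathcal{P}$ requires full $P$-vertices.

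As a sanity check, and to guard against any subtlety in applying Theorem~\ref{xth3.5} in the extreme case $m=n$ (where the graph $\bar G$ in its proof is empty), I would also verify the claim directly.

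First I would show that every $A\in\mathcal{Q}_{\mathrm{SYM}}(\mathcal{P})$ is nonsingular. A composite cycle of length $2n$ in $D(\mathcal{P})$ must cover each pendant vertex $v_{n+i}$. The only cycle through $v_{n+i}$ is the $2$-cycle $(v_i,v_{n+i})$, so the composite cycle is forced to be $\prod_{i=1}^n (v_i,v_{n+i})$. It is therefore unique, $\det(A)\neq 0$, and $m_A(0)=0$.

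Next I would show that every vertex deletion is singular. For each $j$, the digraph $D(\mathcal{P})\setminus\{v_j\}$ has odd order $2n-1$.
\begin{itemize}
\item If $v_j$ is a pendant vertex $v_{n+j}$, then the remaining pendants $v_{n+i}$ with $i\neq j$ still force their $2$-cycles $(v_i,v_{n+i})$. That leaves only $v_j$ uncovered, and $v_j$ has no loop, so there is no spanning composite cycle.
\item If $v_j$ is a cycle vertex, then its pendant $v_{n+j}$ becomes isolated, so again there is no spanning composite cycle.
\end{itemize}
In both cases $\det(A(j))=0$. By the interlacing bound $|m_{A(j)}(0)-m_A(0)|\le 1$, this gives $m_{A(j)}(0)=1=m_A(0)+1$, so every vertex is a $P$-vertex.

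The only step needing care is the forcing argument: the pendant $2$-cycles exhaust all spanning composite cycles, so the long cycles $(v_1,\dots,v_n)$ and its reverse can never contribute. Once that is noted, the corollary follows with no case split on the parity of $n$.
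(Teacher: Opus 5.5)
Your proposal is correct and follows the paper's route: the paper states this corollary as an immediate consequence of Theorem~\ref{xth3.5} with $m=n$, where every vertex of $C_n$ is a host and consecutive hosts are at distance $1$. Your direct check is a sound, self-contained specialization of the sufficiency half of that theorem's proof: the pendant $2$-cycles force a unique spanning composite cycle, and deleting any vertex leaves either an isolated pendant or an uncovered loopless cycle vertex. It also usefully sidesteps the paper's loose description of $\bar{G}$ when $m=n$, since then $\bar{G}$ is empty.
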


The next theorem establishes a relationship between a symmetric sign pattern $\mathcal{P}\in\mathcal{Q}_n$ that requires full $P$-vertices and a principal submatrix of $\mathcal{P}$ of order $n-2$.

\begin{theorem}\label{th0.5e}
	Let $\mathcal{P}=[p_{ij}]$ be an $n\times n$ symmetric sign pattern whose underlying signed undirected graph is $G(\mathcal{P})$. Suppose that $G(\mathcal{P})$ has an edge $\{u,v\}$ with $p_{uv}=p_{vu}=+$ and $deg(u)=deg(v)=2$ in $G(\mathcal{P})$. Assume that $u$ is adjacent to $u_1$, where $u_1\neq v$, and $v$ is adjacent to $v_1$, where $v_1\neq u,u_1$.
	
	If $p_{u,u_1}=p_{v,v_1}$ and $p_{u_1,v_1}\in \{0,-\}$, then $\mathcal{P}$ requires full $P$-vertices if and only if $\mathcal{P}'$ requires full $P$-vertices, where $\mathcal{P'}$ is obtained from $\mathcal{P}$ by deleting the rows and columns corresponding to $u,v$ and setting $p_{u_1,v_1}=p_{v_1,u_1}=-$.
	
	If $p_{u,u_1}\neq p_{v,v_1}$ and $p_{u_1,v_1}\in \{0,+\}$, then $\mathcal{P}$ requires full $P$-vertices if and only if $\mathcal{P}'$ requires full $P$-vertices, where $\mathcal{P'}$ is obtained from $\mathcal{P}$ by deleting the rows and columns corresponding to $u,v$ and setting $p_{u_1,v_1}=p_{v_1,u_1}=+$.
\end{theorem}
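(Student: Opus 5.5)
The approach is a congruence (Schur-complement) reduction that eliminates the adjacent degree-two pair $u,v$. Order the vertices as $u,v,u_1,v_1,\dots$ and write any $A\in\mathcal{Q}_{\mathrm{SYM}}(\mathcal{P})$ with $a_{uv}=c>0$, $a_{uu_1}=x$, $a_{vv_1}=y$, and $a_{uu}=a_{vv}=0$. The zero diagonal at $u,v$ is needed for this reduction, and is implicit in the setting of this section. The block $\begin{bmatrix}0&c\\c&0\end{bmatrix}$ is nonsingular. Eliminating it as in \eqref{eq1.2} shows that $A$ is congruent to
\[
\begin{bmatrix}0&c\\c&0\end{bmatrix}\oplus \bigl(B - E^{\mathsf T}\begin{bmatrix}0&c\\c&0\end{bmatrix}^{-1}E\bigr),
\]
where $B$ is the principal submatrix on the remaining vertices and $E$ has nonzero entries only in columns $u_1,v_1$. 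The Schur complement $A'$ equals $B$ except that the $(u_1,v_1)$ entry becomes $a_{u_1v_1}-xy/c$. When $\sgn(x)=\sgn(y)$ the correction $-xy/c$ is negative; combined with $a_{u_1v_1}\le 0$, this forces the new entry to be negative. Hence $A'\in\mathcal{Q}_{\mathrm{SYM}}(\mathcal{P}')$ with $p'_{u_1v_1}=-$. The case $\sgn(x)\ne\sgn(y)$ with $a_{u_1v_1}\ge 0$ works the same way and gives $+$. Conversely, every $A'\in\mathcal{Q}_{\mathrm{SYM}}(\mathcal{P}')$ arises this way: choose $c=1$ and $x,y$ of the prescribed signs with $|xy|$ large enough, or matching the given entry when $p_{u_1v_1}=0$, and solve for $a_{u_1v_1}$ with the right sign. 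So the map $A\mapsto A'$ is surjective onto $\mathcal{Q}_{\mathrm{SYM}}(\mathcal{P}')$, and $m_A(0)=m_{A'}(0)$.

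Next I compare the $P$-vertex conditions. For any $w\notin\{u,v\}$, deleting $w$ commutes with this elimination, since $u,v$ are untouched. Thus $A(w)$ is congruent to $\begin{bmatrix}0&c\\c&0\end{bmatrix}\oplus A'(w)$, giving $m_{A(w)}(0)-m_A(0)=m_{A'(w)}(0)-m_{A'}(0)$. So every $w\ne u,v$ is a $P$-vertex of $A$ iff it is a $P$-vertex of $A'$. Combined with surjectivity, if $\mathcal{P}$ requires full $P$-vertices then so does $\mathcal{P}'$.

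The main obstacle is the converse for the vertices $u$ and $v$ themselves. In $A(u)$, vertex $v$ becomes a pendant vertex attached to $v_1$ with zero diagonal. Eliminating the nonsingular $2\times2$ block on $\{v,v_1\}$, as in \eqref{eq1.22}, gives
\[
m_{A(u)}(0)=m_{B(\{u_1,v_1\})\text{-part}}(0),
\]
that is, $m_{A(u)}(0)$ equals the nullity of the submatrix of $A$ on $V\setminus\{u,v,v_1\}$, which is $A'(v_1)$ with $u_1$ still present. Since $A'(v_1)$ agrees with the original matrix off $\{u,v\}$, we get $m_{A(u)}(0)=m_{A'(v_1)}(0)=m_{A'}(0)+1=m_A(0)+1$, using that $v_1$ is a $P$-vertex of $A'$. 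Symmetrically, $v$ is a $P$-vertex of $A$ because $u_1$ is a $P$-vertex of $A'$. I expect the delicate points to be two things. The first is checking that after deleting $v_1$ the modified entry $(u_1,v_1)$ no longer matters, so the sign pattern mismatch disappears. The second is handling the degenerate case where $u_1$ and $v_1$ are adjacent with $p_{u_1v_1}\neq0$ in the surjectivity step. Both should be routine once the block eliminations are written out.
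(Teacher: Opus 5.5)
Your proposal is correct and follows essentially the same route as the paper. You eliminate the nonsingular block on $\{u,v\}$ to get $A\cong\begin{bmatrix}0&c\\ c&0\end{bmatrix}\oplus A'$, and likewise for $A(w)$ with $w\neq u,v$. You realize every matrix in $\mathcal{Q}_{\mathrm{SYM}}(\mathcal{P}')$ as such a Schur complement. For $u$ and $v$ you eliminate the pendant pair $\{v,v_1\}$ (respectively $\{u,u_1\}$), so that $m_{A(u)}(0)=m_{A'(v_1)}(0)$; this is exactly the paper's use of $B(2)=\hat{B}(2)$. Your remark that $a_{uu}=a_{vv}=0$ is tacitly needed also matches the paper, whose proof assumes it.

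There is one slip to fix, in the surjectivity step when $p_{u_1v_1}\neq 0$. The entry you must solve for is $a_{u_1v_1}=A'_{u_1v_1}+xy/c$. It has the required sign only when $|xy|/c<|A'_{u_1v_1}|$, so $|xy|$ must be small, not large. The paper takes $xy/c=-A'_{u_1v_1}/2$, which gives $a_{u_1v_1}=A'_{u_1v_1}/2$. Separately, your displayed formula involving $B(\{u_1,v_1\})$ should be replaced by your verbal description: the principal submatrix on $V\setminus\{u,v,v_1\}$, with $u_1$ kept.
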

\begin{proof}
	First, suppose that $\mathcal{P}$ requires full $P$-vertices. We prove the case in which $p_{u,u_1}=p_{v,v_1}$.  Let $B=[b_{ij}]\in \mathcal{Q}_{\mathrm{SYM}}(\mathcal{P}')$.
		Without loss of generality, assume that $u=1$, $v=2$, $u_1=3$, and $v_1=4$.
	Therefore, $b_{12}<0$.
	
	\textbf{Case~1:} $p_{u_1v_1}=0$.
	\\
	Consider 
	$$A=\left[
	\begin{array}{cc|ccccc}
		0 & -\frac{ab}{b_{12}} & a & 0 &0 & \cdots & 0\\
		-\frac{ab}{b_{12}} & 0 & 0 & b &0 & \cdots & 0\\ \hline
		a & 0 & b_{11}  & 0  &b_{13}&\cdots        &  b_{1,n-2} \\
		0 & b & 0  & b_{22}  &  b_{23}&\cdots      &b_{2,n-2}   \\
        0 & 0 & b_{13}& b_{23}& b_{33}&\cdots     &b_{3,n-2}\\
		\vdots & \vdots &\vdots & \vdots&\vdots & \ddots&\vdots \\
		0 & 0 &  b_{1,n-2} &b_{2,n-2} &b_{3,n-2} & \dots       & b_{n-2,n-2}
	\end{array}\right]=\left[
	\begin{array}{c |c }
		A_{11}&C^{\mathsf T}\\
		
		\hline
		C&B'\\
	\end{array} \right]~(say),$$ where $a,b$ are nonzero real numbers with $\sgn(a)=p_{13}$ and $\sgn(b)=p_{24}$. Then $A\in \mathcal{Q}_{\mathrm{SYM}}(\mathcal{P})$, also  
	\begin{equation}\label{eq1.3}
		\left[
		\begin{array}{c |c }
			A_{11}&\textbf{0}^{\mathsf T}\\
			
			\hline
			\textbf{0}&B'-CA_{11}^{-1}C^{\mathsf T}\\

		\end{array} \right]=\left[
		\begin{array}{c |c }
			I_2&\textbf{0}^{\mathsf T}\\
			
			\hline
			-CA_{11}^{-1}&I_{n-2}\\

		\end{array} \right]\left[
		\begin{array}{c |c }
			A_{11}&C^{\mathsf T}\\
			
			\hline
			C&B'\\

		\end{array} \right]\left[
		\begin{array}{c |c }
			I_2&-(CA_{11}^{-1})^{\mathsf T}\\
			
			\hline
			\textbf{0}&I_{n-2}\\

		\end{array} \right],\end{equation}
	where
	$
	A_{11}^{-1}
	=
	\begin{bmatrix}
		0 & -\frac{b_{12}}{ab} \\
		-\frac{b_{12}}{ab} & 0
	\end{bmatrix}$ and
	$B' - C A_{11}^{-1} C^{\mathsf T}=B$. Therefore, $A$ is congruent to 
    \[
	\begin{bmatrix}
		0 & -\frac{ab}{b_{12}} \\
		-\frac{ab}{b_{12}} & 0
	\end{bmatrix}
	\oplus B.
	\]
	
	For $3\le i\le n$, by an argument similar to that in~\eqref{eq1.3},
	we can show that $A(i)$ is congruent to
	\[
	\begin{bmatrix}
		0 & -\frac{ab}{b_{12}} \\
		-\frac{ab}{b_{12}} & 0
	\end{bmatrix}
	\oplus B(i-2).
	\]
	Since, $v_i$ is a $P$-vertex of $A$, we have $m_{A(i)}(0)=m_{A}(0)+1$. Also, $A_{11}$ is nonsingular, hence $m_{B(i-2)}(0)=m_B(0)+1$.
	Therefore, $v_i$ is a $P$-vertex of $B$, for each $i=3,4,\ldots,n$.
	Equivalently,
	$
	P_{\nu}(B)=n-2,
	$
	hence $\mathcal{P}'$ requires full $P$-vertices.
	
	\textbf{Case~2:} $p_{u_1v_1}=-$.
	\\
	Consider 
	$$A=\left[
	\begin{array}{cc|ccccc}
		0 & -\frac{2ab}{b_{12}} & a & 0 &0 & \cdots & 0\\
		-\frac{2ab}{b_{12}} & 0 & 0 & b &0 & \cdots & 0\\ \hline
		a & 0 & b_{11}  & \frac{b_{12}}{2}  &b_{13}&\cdots        &  b_{1,n-2} \\
		0 & b & \frac{b_{12}}{2}  & b_{22}  &  b_{23}&\cdots      &b_{2,n-2}   \\
		\vdots & \vdots &\vdots & \vdots&\vdots & \ddots&\vdots \\
		0 & 0 &  b_{1,n-2} &b_{2,n-2} &b_{2,n-2} & \dots       & b_{n-2,n-2}
	\end{array}\right],$$
	where $a,b$ are nonzero real numbers with $\sgn(a)=p_{13}$ and $\sgn(b)=p_{24}$.
	Then $A \in \mathcal{Q}_{\mathrm{SYM}}(\mathcal{P})$ and, by a congruence
	argument similar to that in~\eqref{eq1.3}, $A$ is congruent to
	\[
	\begin{bmatrix}
		0 & -\frac{2ab}{b_{12}}\\
		-\frac{2ab}{b_{12}} & 0
	\end{bmatrix}
	\oplus B.
	\] Then similarly to Case~1, we can show that $\mathcal{P}'$ requires full $P$-vertices.
	
	Conversely, suppose that $\mathcal{P}'$ requires full $P$-vertices. Let $A\in \mathcal{Q}_{\mathrm{SYM}}(\mathcal{P})$, then the matrix $A$ can be written as
	$$
	A=\left[
	\begin{array}{c c|c c c c}
		0&a_{12}&a_{13}&0&\cdots&0\\
		a_{12}&0&0&a_{24}&\cdots&0\\
		\hline
		a_{13}&0&&&&\\
		
		0&a_{24}&&&\\
		
		\vdots&\vdots&&B&\\
		
		0&0&&&
	\end{array} \right]=\left[
	\begin{array}{c |c }
		A_{11}&C_1^{\mathsf T}\\
		
		\hline
		C_1&B\\
	\end{array} \right]~(say),$$ where $B=[b_{ij}]$ is a symmetric matrix of order $n-2$. Then  
	\begin{equation}\label{eq1.31}
		\left[
		\begin{array}{c |c }
			A_{11}&\textbf{0}^{\mathsf T}\\
			
			\hline
			\textbf{0}&\hat{B}\\

		\end{array} \right]=\left[
		\begin{array}{c |c }
			I_2&\textbf{0}^{\mathsf T}\\
			
			\hline
			-C_1A_{11}^{-1}&I_{n-2}\\

		\end{array} \right]\left[
		\begin{array}{c |c }
			A_{11}&C_1^{\mathsf T}\\
			
			\hline
			C_1&B\\

		\end{array} \right]\left[
		\begin{array}{c |c }
			I_2&-(C_1A_{11}^{-1})^{\mathsf T}\\
			
			\hline
			\textbf{0}&I_{n-2}\\

		\end{array} \right],\end{equation}
	where
	$$
	\hat{B} = B - C_1 A_{11}^{-1} C_1^{\mathsf T}=\begin{bmatrix}
		b_{11}&b_{12}-\frac{a_{13}a_{24}}{a_{12}}& b_{13}& \cdots&b_{1,n-2}\\
		b_{12}-\frac{a_{13}a_{24}}{a_{12}}&b_{22}&b_{23}&\cdots&b_{2,n-2}\\
		b_{13}&b_{23}&b_{33}&\cdots&b_{3,n-2}\\
		\vdots&\vdots&\vdots&\ddots&\vdots\\
		b_{1,n-2}&b_{2,n-2}&b_{3,n-2}&\cdots&b_{n-2,n-2}
		
	\end{bmatrix}.
	$$ Therefore, $A$ is congruent to $A_{11}\oplus \hat{B}$.
	Since $-\frac{a_{13}a_{24}}{a_{12}}<0$ and
	$p_{u_1 v_1}\in\{0,-\}$, it follows that
	$
	\hat{B}\in\mathcal{Q}_{\mathrm{SYM}}(\mathcal{P}').
	$

	For $i=1$,
	let $S_{23}$ be the permutation matrix obtained from
	$I_{n-1}$ by alternating rows two and three.
	Then $$S_{23}A(1)S_{23}^{\mathsf{T}}=\left[
	\begin{array}{cc|cccc}
		0 & a_{24} & 0 & 0 & \cdots & 0\\
		a_{24} & b_{22} & b_{12} & b_{23} & \cdots & b_{2,n-2}\\ \hline
		0 & b_{12} &   &   &        &   \\
		0 & b_{23} &   &   &        &   \\
		\vdots & \vdots & & B(2) & & \\
		0 & b_{2,n-2} &   &   &        &
	\end{array}\right]
	= \left[
	\begin{array}{cc|c}
		0 & a_{24} & \mathbf{0}^{\mathsf T}\\
		a_{24} & b_{22} & C_{2}^{\mathsf T}\\ \hline
		\mathbf{0} & C_{2} & B(2)
	\end{array}\right],$$        
	Arguing as in \eqref{eq1.22}, the matrix $S_{23}A(1)S_{23}^\mathsf{T}$ is congruent to \begin{equation}\label{equ1}
		\begin{bmatrix}
		0&a_{24}\\a_{24}&b_{22}
	\end{bmatrix}\oplus B(2).
\end{equation}  
	Since $B(2)=\hat{B}(2)$ and $v_1$ is a $P$-vertex of $\hat{B}$, it follows that $
	m_{\hat{B}(2)}(0) = m_{\hat{B}}(0) + 1$.
	Hence, from \eqref{equ1},
	$m_{A(1)}(0) = m_A(0) + 1.$  Therefore, $u$ is a $P$-vertex of $A$.   
	
	For $i=2$, 
	$$A(2)=\left[
	\begin{array}{cc|cccc}
		0 & a_{13} & 0 & 0 & \cdots & 0\\
		a_{13} & b_{11} & b_{12} & b_{13} & \cdots & b_{1,n-2}\\ \hline
		0 & b_{12} &   &   &        &   \\
		0 & b_{13} &   &   &        &   \\
		\vdots & \vdots & & B(1) & & \\
		0 & b_{1,n-2} &   &   &        &
	\end{array}\right]
	= \left[
	\begin{array}{cc|c}
		0 & a_{13} & \mathbf{0}^{\mathsf T}\\
		a_{13} & b_{11} & C_{3}^{\mathsf T}\\ \hline
		\mathbf{0} & C_{3} & B(1)
	\end{array}\right],$$
	similarly, $A(2)$ is congruent to $$\begin{bmatrix}
		0&a_{13}\\a_{13}&b_{11}
	\end{bmatrix}\oplus B(1).$$   
	Since $B(1)=\hat{B}(1)$ and $u_1$ is a $P$-vertex of $\hat{B}$, it follows that $
	m_{\hat{B}(1)}(0) = m_{\hat{B}}(0) + 1$.
	Hence,
	$m_{A(2)}(0) = m_A(0) + 1.$  Therefore, $v$ is a $P$-vertex of $A$.

	For every $3 \le i \le n$, by an argument similar to~\eqref{eq1.31}, the matrix
	$A(i)$ is congruent to
	$
	\begin{bmatrix}
		0 & a_{12}\\
		a_{12} & 0
	\end{bmatrix}
	\oplus \hat{B}(i-2)
	$.
	Since $v_i$ is a $P$-vertex of $B'$, hence $m_{\hat{B}(i-2)}(0) = m_{\hat{B}}(0) + 1$, and consequently
	$
	m_{A(i)}(0) = m_A(0) + 1$. Thus, $v_i$ is a $P$-vertex of $A$, for $3 \le i \le n$. Therefore, $\mathcal{P}$ requires full $P$-vertices.	
\end{proof}
The following corollary is an immediate consequence of Theorem~\ref{th0.5e}.
\begin{cor}\label{th0.51e}
	Let $\mathcal{P}=[p_{ij}]$ be an $n\times n$ symmetric sign pattern whose underlying signed undirected graph is $G(\mathcal{P})$. Suppose that $G(\mathcal{P})$ has an edge $\{u,v\}$ with $p_{uv}=p_{vu}=-$ and $deg(u)=deg(v)=2$ in $G(\mathcal{P})$. Assume that $u$ is adjacent to $u_1$, where $u_1\neq v$, and $v$ is adjacent to $v_1$, where $v_1\neq u,u_1$.
	
	If $p_{u,u_1}=p_{v,v_1}$ and $p_{u_1,v_1}\in \{0,+\}$, then $\mathcal{P}$ requires full $P$-vertices if and only if $\mathcal{P}'$ requires full $P$-vertices, where $\mathcal{P'}$ is obtained from $\mathcal{P}$ by deleting the rows and columns corresponding to $u,v$ and setting $p_{u_1,v_1}=p_{v_1,u_1}=+$.
	
	If $p_{u,u_1}\neq p_{v,v_1}$ and $p_{u_1,v_1}\in \{0,-\}$, then $\mathcal{P}$ requires full $P$-vertices if and only if $\mathcal{P}'$ requires full $P$-vertices, where $\mathcal{P'}$ is obtained from $\mathcal{P}$ by deleting the rows and columns corresponding to $u,v$ and setting $p_{u_1,v_1}=p_{v_1,u_1}=-$.
\end{cor}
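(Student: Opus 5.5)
The plan is to reduce Corollary~\ref{th0.51e} to Theorem~\ref{th0.5e} by a negation. Negating a symmetric matrix preserves all eigenvalue multiplicities at $0$, both of the matrix and of each principal submatrix. For a symmetric pattern $\mathcal{Q}$ one has $A\in\mathcal{Q}_{\mathrm{SYM}}(\mathcal{Q})$ iff $-A\in\mathcal{Q}_{\mathrm{SYM}}(-\mathcal{Q})$, and $(-A)(i)=-A(i)$. Hence $P_\nu(A)=P_\nu(-A)$, so $\mathcal{Q}$ requires full $P$-vertices if and only if $-\mathcal{Q}$ does. This is the equivalence noted in the introduction (negation preserves $m_A(0)$), applied entrywise to all principal submatrices.

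Set $\mathcal{R}=-\mathcal{P}$. Then $G(\mathcal{R})=G(\mathcal{P})$, so the degrees of $u,v$ and the adjacencies $u\sim u_1$, $v\sim v_1$ are unchanged, and $r_{uv}=+$.

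In the first case, $p_{u,u_1}=p_{v,v_1}$ gives $r_{u,u_1}=r_{v,v_1}$, and $p_{u_1,v_1}\in\{0,+\}$ gives $r_{u_1,v_1}\in\{0,-\}$. So the first part of Theorem~\ref{th0.5e} applies to $\mathcal{R}$. It says $\mathcal{R}$ requires full $P$-vertices iff $\mathcal{R}'$ does, where $\mathcal{R}'$ deletes $u,v$ and sets $r_{u_1v_1}=-$. But $\mathcal{R}'=-\mathcal{P}'$, with $\mathcal{P}'$ as in the corollary, whose $(u_1,v_1)$ entry is $+$. Chaining the equivalences gives: $\mathcal{P}$ requires full $P$-vertices $\iff$ $\mathcal{R}$ does $\iff$ $\mathcal{R}'$ does $\iff$ $\mathcal{P}'$ does.

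In the second case, $p_{u,u_1}\neq p_{v,v_1}$ gives $r_{u,u_1}\neq r_{v,v_1}$, and $p_{u_1,v_1}\in\{0,-\}$ gives $r_{u_1,v_1}\in\{0,+\}$. So the second part of Theorem~\ref{th0.5e} applies, with the modified entry set to $+$ in $\mathcal{R}'$. That again corresponds to $-$ in $\mathcal{P}'=-\mathcal{R}'$.

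The only real point to check is that the deletion-and-reset operation commutes with negation: deleting rows and columns commutes with negation, and resetting the $(u_1,v_1)$ entry to $\epsilon$ in $\mathcal{R}$ is the same as resetting it to $-\epsilon$ in $\mathcal{P}$. Both are immediate, so I expect no real obstacle. An alternative route is to rerun the proof of Theorem~\ref{th0.5e} directly, where the Schur complement shifts $b_{12}$ by $-a_{13}a_{24}/a_{12}$, which now has the opposite sign. The negation argument is cleaner.
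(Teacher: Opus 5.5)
Your proof is correct and takes the same route as the paper. The paper gives no separate argument and simply calls the corollary an immediate consequence of Theorem~\ref{th0.5e}; your passage to $\mathcal{R}=-\mathcal{P}$ is the natural reduction that justifies this, since negation preserves $G(\mathcal{P})$ and every $m_A(0)$, $m_{A(i)}(0)$, and commutes with deleting $u,v$ and resetting the $(u_1,v_1)$ entry (a signature similarity at $u$, which swaps the two cases of the theorem, would serve equally well).
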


\begin{eg}\rm
	Suppose that $\mathcal{P}\in \mathcal{Q}_6$ is a symmetric sign pattern with a $0$-diagonal. From left to right, let $\mathcal{P}$, $\mathcal{P}_1$, and $\mathcal{P}_2$ denote the sign patterns corresponding to the signed directed graphs $D$, $D_1$, and $D_2$, respectively, shown in Fig.~\ref{fig8}. 
	Let $G$, $G_1$, and $G_2$ be the underlying signed undirected graphs corresponding to $\mathcal{P}$, $\mathcal{P}_1$, and $\mathcal{P}_2$, respectively.
	
	\begin{figure}[H] 
		\tikzset{node distance=1cm}
		\centering
		\usetikzlibrary{decorations.markings}
		
		\tikzset{
			vertex/.style={
				draw, circle, fill=black,
				inner sep=0pt, minimum size=0.15cm
			},
			edge/.style={
				thick,
				postaction={
					decorate,
					decoration={
						markings,
						mark=at position 0.75 with {\arrow{stealth}}
					}
				}
			}
		}
		
		\begin{tikzpicture}[scale=1, every node/.style={font=\small}]
			\node[vertex, label=right:$u$](v1)at(-3,0){};
			\node[vertex, label=above:$u_1$](v2)at(-4,1){};
			\node[vertex](v3)at(-5.5,1){};
			\node[vertex](v4)at(-6.5,0){};
			\node[vertex, label=below:$v_1$](v5)at(-5.5,-1){};	\node[vertex, label=below:$v$](v6)at(-4,-1){};
			
			\draw[edge, bend left=15] (v1) to node[left] {$+$} (v2);
			\draw[edge, bend left=15] (v2) to node[right] {$+$} (v1);
			
			\draw[edge, bend left=15] (v2) to node[below] {$+$} (v3);
			\draw[edge, bend left=15] (v3) to node[above] {$+$} (v2);
			
			\draw[edge, bend left=15] (v3) to node[right] {$+$} (v4);
			\draw[edge, bend left=15]  (v4) to node[left] {$+$} (v3);
			
			\draw[edge, bend left=15] (v4) to node[right] {$+$} (v5);
			\draw[edge, bend left=15] (v5) to node[left] {$+$} (v4);
			
			\draw[edge, bend left=15] (v5) to node[above] {$+$} (v6);
			\draw[edge, bend left=15] (v6) to node[below] {$+$} (v5);
			
			\draw[edge, bend left=15] (v6) to node[left] {$+$} (v1);
			\draw[edge, bend left=15] (v1) to node[right] {$+$} (v6);

			\draw[very thick,->] (-2,0) -- (-1,0) ;

			\node[vertex](v1)at(0,0){};
			\node[vertex](v2)at(1,1){};
			\node[vertex, label=right:$u_1$](v3)at(2.5,1){};
			\node[vertex, label=below:$v_1$](v4)at(1,-1){};	
			
			\draw[edge, bend left=15] (v1) to node[left] {$+$} (v2);
			\draw[edge, bend left=15] (v2) to node[right] {$+$} (v1);
			
			\draw[edge, bend left=15] (v2) to node[above] {$+$} (v3);
			\draw[edge, bend left=15] (v3) to node[below] {$+~~~$} (v2);
			
			\draw[edge, bend left=10] (v3) to node[right] {$-$} (v4);
			\draw[edge, bend left=10]  (v4) to node[left] {$-$} (v3);
			
			\draw[edge, bend left=15] (v1) to node[right] {$+$} (v4);
			\draw[edge, bend left=15] (v4) to node[left] {$+$} (v1);

			\draw[very thick,->] (4,0) -- (5,0) ;

			\node[vertex](v1)at(6,0){};
			\node[vertex](v2)at(7.5,0){};
			
			\draw[edge, bend left=15] (v1) to node[above] {$+$} (v2);
			\draw[edge, bend left=15] (v2) to node[below] {$+$} (v1);
		\end{tikzpicture}
		\caption{Determining whether $\mathcal{P}$ requires full $P$-vertices using Theorem~\ref{th0.5e} and Corollary~\ref{th0.51e}.}
		\label{fig8}
	\end{figure}
	Applying Theorem~\ref{th0.5e} to the edge $\{u,v\}$ of $G$, we obtain that $\mathcal{P}$  requires full $P$-vertices if and only if $\mathcal{P}_1$  requires full $P$-vertices. 
	Again, applying Corollary~\ref{th0.51e} to the edge $\{u_1,v_1\}$ of $G_1$, it follows that $\mathcal{P}_1$  requires full $P$-vertices if and only if $\mathcal{P}_2$  requires full $P$-vertices. 
	Since $G_2$ is a path of even length, Theorem~\ref{th0.1e} implies that $\mathcal{P}_2$  requires full $P$-vertices. Consequently, $\mathcal{P}$  requires full $P$-vertices.
\end{eg}

We now establish an interesting consequence of Theorem~\ref{th0.5e}. For this purpose, we introduce the following notations.

\begin{defn}
	Suppose that $\mathcal{P}$ is a symmetric sign pattern, whose underlying signed undirected graph is $G(\mathcal{P})$. Then any two cycles $\mathcal{C}_1$, $\mathcal{C}_2$ in $G(\mathcal{P})$ are called \textit{path-adjacent} if there exists a path $$\{u,v_1\}\{v_1,v_2\}\cdots \{v_{r-1},v_{r}\}\{v_{r},w\}$$ in $G(\mathcal{P})$ where $u\in V(\mathcal{C}_1)$, $w\in V(\mathcal{C}_2)$ and $v_1,v_2,...,v_{r}$ are not vertices of any cycle in $G(\mathcal{P})$.
\end{defn}

\begin{defn}
    	A \textit{dumbbell graph}, denoted by $D_{l,m,n}$, is a bicyclic graph obtained by joining two vertex-disjoint cycles $C_l$ and $C_m$ by a path $P_{n+3}$ of length $n+3$, having only its end-vertices in common with the two cycles, where $l,m \geq 3$ and $n \geq -1$ (see Figure \ref{fig12.a}).
\end{defn}
	\begin{figure}[H]
	\centering
	\begin{tikzpicture}
		\tikzstyle{vertex}=[draw, circle, inner sep=0pt, minimum size=.15cm, fill=black]
			
			\node[vertex,label=left:$u_1$](v1)at(0,0){};
		\node[vertex,label=above:$u_2$](v2)at(-0.75,1){};
		\node[vertex,label=above:$u_3$](v3)at(-2,1){};
		\node[vertex](v4)at(-2,-1){};
		\node[vertex,label=left:$u_4$](v5)at(-2.75,0){};
		\node[vertex,label=below:$u_l$](v6)at(-0.75,-1){};

		\draw[thick](v1)--(v2)--(v3)--(v5);
		\draw[thick,dotted](v5)--(v4)--(v6);
		\draw[thick](v6)--(v1);
			
			\node[vertex,label=right:$w_1$](u1)at(4.5,0){};
		\node[vertex,label=above:$w_2$](u2)at(5.25,1){};
		\node[vertex,label=above:$w_3$](u3)at(6.5,1){};
		\node[vertex](u4)at(6.5,-1){};
		\node[vertex,label=right:$w_4$](u5)at(7.25,0){};
		\node[vertex,label=below:$w_m$](u6)at(5.25,-1){};

		\draw[thick](u1)--(u2)--(u3)--(u5);
		\draw[thick,dotted](u5)--(u4)--(u6);
		\draw[thick](u6)--(u1);
			
			\node[vertex,label=above:$v_1$](p1) at (1.5,0) {};
			\node[vertex,label=above:$v_{n+1}$](p2) at (3,0) {};
			
			\draw[thick](v1)--(p1);
			\draw[thick](p2)--(u1);
			\draw[thick,dotted](p1)--(p2);
		\end{tikzpicture}
        \caption{$D_{l,m,n}$}
        \label{fig12.a}
\end{figure}
	
\begin{theorem}\label{th0.61e}
		Let $\mathcal{P}\in\mathcal{Q}_n$ be a nonnegative symmetric sign pattern with a $0$-diagonal, whose underlying signed undirected graph is $G(\mathcal{P})= D_{4p+2,4q+2,2l-1}$, where $p,q,l\in \mathbb{N}$. Then $\mathcal{P}$ requires full $P$-vertices.
\end{theorem}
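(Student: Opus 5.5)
The plan is to reduce the dumbbell step by step, using Theorem~\ref{th0.5e} and Corollary~\ref{th0.51e}, until the underlying graph is one to which an earlier characterization applies. Label the cycles $C_{4p+2}=u_1u_2\cdots u_{4p+2}$ and $C_{4q+2}=w_1w_2\cdots w_{4q+2}$, and the connecting path $u_1v_1\cdots v_{2l+1}w_1$. Since $\mathcal{P}$ is nonnegative, every edge is $+$. Every vertex other than the hosts $u_1,w_1$ has degree $2$, and away from the hosts two vertices at distance $2$ are nonadjacent. So for an edge $\{u,v\}$ of $C_{4p+2}$ with $u,v\neq u_1$, whose outer neighbours $u',v'$ are distinct and nonadjacent, the first part of Theorem~\ref{th0.5e} applies: $p_{u,u'}=p_{v,v'}=+$ and $p_{u',v'}=0$. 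It tells us that $\mathcal{P}$ requires full $P$-vertices if and only if the pattern with $u,v$ deleted and a new edge $\{u',v'\}$ of sign $-$ does. This shortens the cycle by $2$.

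First I will iterate this on $C_{4p+2}$, always choosing the contracted edge $\{u,v\}$ with both endpoints different from $u_1$. After each step the cycle carries negative edges, so I will invoke whichever of Theorem~\ref{th0.5e} or Corollary~\ref{th0.51e} matches the current signs of $\{u,v\}$, $\{u,u'\}$ and $\{v,v'\}$. The choice is made so that the hypothesis $p_{u',v'}\in\{0,\pm\}$ holds with the required sign; this is automatic while $u'$ and $v'$ are nonadjacent, since then $p_{u',v'}=0$. I will do the same on $C_{4q+2}$ and, if convenient, on interior edges of the connecting path. A path contraction deletes two interior path vertices and joins their outer neighbours with a signed edge, so it shortens the path by $2$ without creating a cycle.

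The bookkeeping invariant I intend to maintain is the product of edge signs around each remaining cycle, corrected by $(-1)$ per contraction, which is equivalent to the sign of the composite cycles of full length. Because the cycle length is $\equiv 2 \pmod 4$ and all original signs are $+$, after reducing $C_{4p+2}$ to $C_6$ and then to $C_4$, the resulting $4$-cycle should have all of its spanning composite cycles of the same sign. That is exactly the hypothesis of Theorem~\ref{th0.2e} and Theorem~\ref{th0.6e}. Computing this sign carefully, and checking that the congruence $4p+2$ (rather than $4p$) is what makes the two $2$-matchings and the two orientations of the $4$-cycle agree, is the main obstacle. I would first verify it directly on $C_6$ with one pendant path, where the final $C_4$ has exactly one negative edge as in the example after Theorem~\ref{th0.2e}.

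Once both cycles are reduced to $4$-cycles and the connecting path to odd length, one more contraction on each $C_4$ turns it into a single edge. Here Theorem~\ref{th0.5e} is applied with $u'$ adjacent to $v'$, using the computed sign condition on $p_{u',v'}$. The result is a tree sign pattern, a path with pendant edges at the two former hosts. Theorem~\ref{thx1} then finishes the proof, provided this tree has a perfect matching, and I will check that the parities of the remaining path length and of the pendants allow one. If this final tree step fails on parity, the fallback is to stop once one cycle is $C_4$ and the other has been collapsed to an edge. The graph is then a single even cycle with trees attached, and I would apply Theorem~\ref{th0.6e} with its distance and degree conditions, which hold because the path has odd interior length and its interior vertices have degree $2$.
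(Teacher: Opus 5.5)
Your reduction is the paper's: you apply Theorem~\ref{th0.5e} (and Corollary~\ref{th0.51e}) repeatedly on edges of $C_{4p+2}$ that avoid the host, and end with a contraction on a $4$-cycle in which $u'$ and $v'$ are adjacent. The paper stops once $C_{4p+2}$ has collapsed to the pendant edge $\{u_1,u_{4p+2}\}$. It then finishes with Corollary~\ref{cor1x} on $C_{4q+2}\cdot P_{2l+2}$, using that the all-positive $C_{4q+2}$ meets the condition of Theorem~\ref{th0.2e}. Collapsing the second cycle as well and closing with Theorem~\ref{thx1} is an equally good ending.

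The sign bookkeeping you single out as the main obstacle is routine. A contraction with $p_{u'v'}=0$ gives $\{u',v'\}$ the sign $s=-p_{uv}p_{uu'}p_{vv'}$. So it lowers the cycle length $k$ by $2$ and flips the product $\pi$ of edge signs around the cycle. For even $k$, the Hamiltonian cycles have sign $-\pi$ and the two perfect matchings have sign $(-1)^{k/2}$, so the composite-cycle condition of Theorem~\ref{th0.2e} reads $\pi=(-1)^{k/2+1}$. This holds for the all-positive $C_{4p+2}$ and is preserved by each contraction. At $k=4$ it says $\pi=-1$, which is exactly the requirement $p_{u'v'}=s$ for the final contraction.

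The real problem is your labeling of the connecting path as $u_1v_1\cdots v_{2l+1}w_1$, with $2l+1$ interior vertices. Then $G(\mathcal{P})$ is bipartite of odd order $4p+4q+2l+5$, and no $A\in\mathcal{Q}_{\mathrm{SYM}}(\mathcal{P})$ can have full $P$-vertices. To see this, write $A=\begin{bmatrix}0&B\\ B^{\mathsf T}&0\end{bmatrix}$ with $B$ of size $r\times s$, $r>s$. A vertex indexing a row of $B$ is a $P$-vertex if and only if deleting that row lowers $\rank(B)$, and this cannot happen for all $r$ rows, since that would force $\rank(B)=r>s$.

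Your endgame shows the failure concretely:
\begin{itemize}
\item Contractions preserve the parity of the host-to-host length $2l+2$, so it never becomes odd, contrary to what you assert.
\item The final path has odd order, so it has no perfect matching, and Theorem~\ref{thx1} says it does \emph{not} require full $P$-vertices.
\item In your fallback, the leaf lies at odd distance $2l+3$ from the surviving $C_4$, which violates the hypothesis of Theorem~\ref{th0.6e}.
\end{itemize}

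The intended reading of $D_{4p+2,4q+2,2l-1}$ joins the hosts by a path of odd length $2l+1$. The figure of $D_{l,m,n}$ shows $n+1$ interior vertices, and the paper's proof uses $v_1,\dots,v_{2l}$; the phrase ``of length $n+3$'' in the definition should read ``on $n+3$ vertices''. With interior vertices $v_1,\dots,v_{2l}$, your final path has $2l+4$ vertices and a perfect matching, and your argument closes.
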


\begin{proof}
    Suppose that the cycle $C_{4p+2}=u_1u_2\cdots u_{4p+2}$ is path-adjacent to the cycle $C_{4q+2}$. Let $u_1 R w_1$ be the path joining $C_{4p+2}$ and $C_{4q+2}$, where
	\[
	R = \{v_1,v_2\}\{v_2,v_3\} \cdots \{v_{2l-1},v_{2l}\},
	\]
	such that $u_1 \in V(C_{4p+2})$, $w_1 \in V(C_{4q+2})$, and $v_i \notin V(C_{4p+2}) \cup V(C_{4q+2})$ for all $i = 1,2,\ldots,2l$.
    
    Since $p_{u_2u_3}=+$ and $\deg(u_2)=\deg(u_3)=2$ in $G(\mathcal{P})$, where $u_2$ is adjacent to $u_1$, and $u_3$ is adjacent to $u_4$, $p_{u_1u_2}=p_{u_3u_4}=+$, and $p_{u_1u_4}=0$, it follows from Theorem~\ref{th0.5e} that $\mathcal{P}$ requires full $P$-vertices if and only if $\mathcal{P}^1$ requires full $P$-vertices, where $\mathcal{P}^1=[p_{ij}^1]\in \mathcal{Q}_{n-2}$ is the principal subpattern of $\mathcal{P}$ obtained by deleting the rows and columns corresponding to $u_2$ and $u_3$, and setting $p_{u_1u_4}=-$.
	Therefore, the underlying graph of $\mathcal{P}^1$, denoted by $G^1$, is obtained from $G(\mathcal{P})$ by replacing $C_{4p+2}$ with the cycle $C^1:u_1u_4u_5\cdots u_{4p+2},$ which has length $4p$.
	
	In $\mathcal{P}^1$, since $p_{u_4u_5}^1 = +$ and $\deg(u_4)=\deg(u_5)=2$ in $G^1$, where $u_4$ is adjacent to $u_1$, and $u_5$ is adjacent to $u_6$, $p_{u_1u_4}^1 \neq p_{u_5u_6}^1$, and $p_{u_1u_6}^1 \in \{0,+\}$, it follows from Theorem~\ref{th0.5e} that $\mathcal{P}^1$ requires full $P$-vertices if and only if $\mathcal{P}^2$ requires full $P$-vertices, where $\mathcal{P}^2 = [p_{ij}^2] \in \mathcal{Q}_{n-4}$ is the principal subpattern of $\mathcal{P}^1$ obtained by deleting the rows and columns corresponding to $u_4$ and $u_5$, and setting $p_{u_1u_6}^1 = +$.
	Therefore, the underlying graph of $\mathcal{P}^2$, denoted by $G^2$, is obtained from $G^1$ by replacing the cycle $C^1$ with the cycle $C^2 : u_1u_6u_7 \cdots u_{4p+2}$,
	in which all edges are positive.
	
	Since the cycle $C$ has $4p+2$ vertices, by continuing this process iteratively, after $2p$ steps we obtain that $\mathcal{P}$ requires full $P$-vertices if and only if $\mathcal{P}^{2p}$ requires full $P$-vertices, where $\mathcal{P}^{2p} = [p_{ij}^{2p}] \in \mathcal{Q}_{n-4p}$ is the principal subpattern of $\mathcal{P}$ obtained by deleting the rows and columns corresponding to $u_2, u_3, \ldots, u_{4p+1}$, and setting $p_{u_1u_{4p+2}} = +$.
	Therefore, the underlying graph of $\mathcal{P}^{2p}$, denoted by $G^{2p}$, is obtained from $G(\mathcal{P})$ by replacing the cycle $C_{4p+2}$ with the edge $\{u_1,u_{4p+2}\}$.

    Therefore, $G^{2p}=C_{4q+2}\cdot P_{2l+2}$, by Corollary~\ref{cor1x}, $\mathcal{P}^{2p}$ requires full $P$-vertices, hence $\mathcal{P}$ requires full $P$-vertices.
\end{proof}

\section{Conclusions}\label{s5}

We have studied the full $P$-vertex problem for symmetric sign patterns and established structural criteria for requiring full $P$-vertices. In particular, we obtained necessary and sufficient conditions for important classes of graphs, including trees and unicyclic graphs. We showed that a tree sign pattern with a zero diagonal requires full $P$-vertices if and only if its underlying graph admits a perfect matching. We also derived several conditions for sign patterns whose underlying graph contain cycles and characterized such pattern requiring full $P$-vertices.

Although Theorem~\ref{th0.61e} addresses sign patterns whose underlying graph contain more than one cycle, the general problem remains open. Further research may focus on broader classes of graphs, including bicyclic and more general cyclic structures, to determine conditions under which they require full $P$-vertices.

\newpage

\end{document}